\numberwithin{equation}{section}
\newenvironment{proof}{\removelastskip\par\medskip   
\noindent{\em proof} \rm}{\penalty-20\null\hfill$\square$\par\medbreak}
\DeclareMathOperator*{\esssup}{ess\,sup}
\DeclareMathOperator*{\essinf}{ess\,inf}
\newtheorem{theorem}{Theorem}[section]
\newtheorem{proposition}[theorem]{Proposition}
\newtheorem{definition}[theorem]{Definition}
\newtheorem{remark}[theorem]{Remark}
\newcommand{\beq}{\begin{equation}}
\newcommand{\eeq}{\end{equation}}
\newcommand{\lleq}{\prec}
\newcommand{\ggeq}{\succ}
\newcommand{\bd}{{\mathbf\Delta}}
\newcommand{\CD}{{\sf CD}}
\newcommand{\RCD}{{\sf RCD}}
\newcommand{\R}{\mathbb{R}}
\newcommand{\mm}{{\mbox{\boldmath$m$}}}
\newcommand{\ggamma}{{\mbox{\boldmath$\gamma$}}}
\newcommand{\sfd}{{\sf d}}
\newcommand{\Kliminf}{K\kern-3pt-\kern-2pt\mathop{\rm lim\,inf}\limits}  
\newcommand{\supp}{\mathop{\rm supp}\nolimits}   
\newcommand{\Lip}{\mathop{\rm Lip}\nolimits}          
\renewcommand{\d}{{\mathrm d}}
\newcommand{\restr}[1]{\lower3pt\hbox{$|_{#1}$}}
\newcommand{\la}{{\langle}}                  
\newcommand{\ra}{{\rangle}}
\newcommand{\eps}{\varepsilon}  
\newcommand{\nchi}{{\raise.3ex\hbox{$\chi$}}}
\newcommand{\lims}{\varlimsup}
\newcommand{\fr}{\hfill$\blacksquare$}                      
\newcommand{\probt}[1]{\mathscr P_2(#1)}                   
\renewcommand{\mm}{\mathfrak m}                                
\title{The abstract Lewy-Stampacchia inequality and applications}
\author{Nicola Gigli\thanks{Institut de math\'ematiques de Jussieu - UPMC}, Sunra Mosconi\thanks{Universit\`a di Catania}}
\begin{document}
\maketitle
\begin{abstract}
We prove an abstract and general version of the Lewy-Stampacchia inequality. We then show how to reproduce more classical versions of it and, more importantly, how it can be used in conjunction with Laplacian comparison estimates to produce large class of functions with bounded Laplacian on spaces with a lower bound on the Ricci curvature. 
\end{abstract}
\tableofcontents
\section{Introduction}
The Lewy-Stampacchia inequality \cite{LewyStampacchia70} is a classical inequality concerning the solution of the obstacle problem. It can be stated as follows: let $\Omega\subset \R^d$ be a given open bounded set, $\varphi\in C^\infty_c(\Omega)$ and $u$ the minimum of $E(v):=\frac12\int|\nabla v|^2\,\d\mathcal L^d$ among all $v\in W^{1,2}_0(\Omega)$ with $v\geq \varphi$. Then
\begin{equation}
\label{eq:lsintro}
0\land\Delta\varphi \leq\Delta u\leq 0.
\end{equation}
Here the inequality $\Delta u\leq 0$ is obvious because $u$ minimizes the energy among positive perturbations. To see why $0\land \Delta\varphi\leq\Delta u$ holds, very informally, notice that where $\{u>\varphi\}$, $u$ is harmonic and hence $\Delta u=0$, while where $\{u=\varphi\}$ we have  $\Delta u=\Delta\varphi$ (the precise derivation of \eqref{eq:lsintro} - which we do not discuss in this introduction - must take into account what happens   at the boundary of the set $\{u=\varphi\}$).

Over time, inequality \eqref{eq:lsintro} has been  generalized in several different directions, among others we mention \cite{Troianiello87} as a general reference for general linear operators and boundary values, \cite{MokraneMurat04} for nonlinear Leray-Lions operators, \cite{Rodrigues-Teymurazyan11} for nonlinear $p$-Laplacian type operators, and \cite{Servadei-Valdinoci13} for the fractional Laplacian and the Laplacian in the Heisenberg group (see also \cite{PinamontiValdinoci13} for this latter setting).  

The first scope of this paper is to further investigate the structure of the Lewy-Stampacchia inequality and to provide an abstract version of it in the context of topological vector lattices $(B,\tau,\lleq)$, see Theorem \ref{LS} for the precise formulation.  Beside the higher level of abstraction we reach, our approach is new in the sense that it does not rely on differentiability properties of the convex functional $E:B\to\R\cup\{+\infty\}$ considered, nor on the strict $\cal T$-monotonicity of its subdifferential, the latter meaning that
\begin{subequations}
\begin{align}
\label{eq:tmonotintro}
\langle u^*-v^*, (u-v)\lor 0\rangle&\geq 0 \qquad \phantom{\Leftrightarrow}& &\forall u^*\in \partial E(u), \ v^*\in \partial E(v),\\
\label{eq:tmonotintro2}
\langle u^*-v^*, (u-v)\lor 0\rangle&= 0 \qquad \qquad\Leftrightarrow & &u\lleq v.
\end{align}
\end{subequations}
In fact, what turns out to be crucial is the submodularity property, also called boolean subadditivity, of the functional itself, i.e.:
\begin{equation}
\label{eq:subintro}
E(u\land v)+E(u\lor v)\leq E(u)+E(v),\qquad\forall u,v\in B.
\end{equation}
While submodular functions are well established tools in discrete optimization, their r\^ole in the general theory of vector lattices, and in Lewy-Stampacchia type estimates in particular, has not yet, as far as we  are aware of, been recognized. 

Being a zeroth-order condition, verifying the submodularity \eqref{eq:subintro} for a given functional is a much easier and a more direct task than obtaining the strict $\cal T$-monotonicity of its differential, which, especially for non-differentiable functionals, requires a good knowledge of the subdifferential itself. Moreover, submodularity is a weaker condition, since at the derivative level is equivalent to $\cal T$-monotonicity \eqref{eq:tmonotintro} of the differential instead of its {\em strict} $\cal T$-monotonicity \eqref{eq:tmonotintro},\eqref{eq:tmonotintro2}.

\bigskip

Beside recovering the classical Lewy-Stampacchia inequality for the Laplacian and showing how to quickly re-obtain the one for the fractional Laplacian (recently proved in \cite{Servadei-Valdinoci13}), we apply the abstract formulation to the double obstacle problem on $\CD^*(K,N)$ spaces, which was in fact  the main motivation for starting this project. $\CD^*(K,N)/ \RCD^*(K,N)$ spaces  are metric measure structures which, in a sense, resemble Finslerian/Riemannian manifolds with Ricci curvature bounded from below by $K$ and dimension bounded from above by $N$,  see \cite{Lott-Villani09}, \cite{Sturm06I}, \cite{Sturm06II}, \cite{AmbrosioGigliSavare11}, \cite{Gigli12}, \cite{Erbar-Kuwada-Sturm13} for the relevant definitions.

Being the curvature-dimension condition a second-order notion, one expects the presence of `many' functions with some sort of second order regularity. Yet, priori to the present manuscript the only smoothing tool available was regularization with the heat flow which, due to fast diffusion, offers little control on the local behavior of the regularized functions.

Here we couple the Lewy-Stampacchia inequality with the Laplacian comparison estimates proved in \cite{Gigli12} to produce `constrained' functions with bounded Laplacian. In particular, on $\CD^*(K,N)$ spaces we shall build cut-off functions and regularized Kantorovich potentials from intermediate times along a geodesic, in both cases producing functions with bounded Laplacian. See Theorems \ref{thm:cutoff} and \ref{thm:regkant} for the precise formulation.  The relevance of having smooth cut-off functions is clear, on the other hand having smooth Kantorovich potentials seems crucial in order to be able to differentiate functionals along a $W_2$-geodesic, see for instance the discussion at the end of \cite{Gigli13}. 

We remark that cut-off functions were already built in \cite{AMS13} on $\RCD(K,\infty)$ spaces, but the technique seems not applicable to the class of $\CD^*(K,N)$ spaces. 

In the stricter $\RCD^*(K,N)$ class  our construction produces Lipschitz functions. This regularity result has little to do with the Lewy-Stampacchia inequality, but is rather based on Lipschitz continuity  of harmonic functions on $\RCD^*(K,N)$ spaces recently obtained in \cite{Jiang13} (see also \cite{Kell13} and \cite{Hua-Kell-Xia13}) together with quite standard techniques in the setting of the obstacle problem, see Section \ref{se:lip}.

\section{The abstract Lewy-Stampacchia inequality}
\subsection{Topological vector lattices}
Here we briefly introduce the basic notions needed to state the Lewy-Stampacchia inequality in an abstract framework, referring to \cite{Peressini67} for a detailed discussion about ordered topological vector spaces.

A \emph{lattice} $(S, \lleq )$ is given by a set $S$ with a partial ordering $\lleq$ such that for every $x, y\in S$ there exist elements $x\lor y,x\land y\in S$ satisfying
\[
\begin{array}{rl}
\qquad x \lleq  x\lor y,\\
y \lleq x\lor y,\\
x\lleq z, \ y\lleq z\quad \Rightarrow \quad &x\lor y\lleq z,
\end{array}
\qquad\textrm{and}\qquad
\begin{array}{rl}
  x\land y\lleq x,\\
x\land y\lleq y,\\
z\lleq x,\ z\lleq y\quad \Rightarrow \quad &z\lleq x\land y.
\end{array}
\]
Given two elements $x,y$ of a lattice $S$ with $x \lleq y$ we denote by $[x,y]\subset S$ the interval defined by  $x$ and $y$, i.e.:
\[
[x,y]:=\{z\in S\ :\ x \lleq z \lleq y\}.
\]
Similarly, by $]-\infty,x]$ we intend the set $\{z: z\lleq x\}$ and by $[x,+\infty[$ the set  $\{z: x\lleq z\}$. Subsets of $S$ contained in some interval $[x,y]$ are called order-bounded.

\begin{definition}[Topological vector lattice]
A \emph{topological vector lattice} $(B,\tau,\lleq)$ is a Hausdorff locally convex topological vector space $(B,\tau)$ endowed with a lattice structure compatible with the vector one in the sense that for given $x,y\in B$ with $x\lleq y$ we have
\[
\begin{split}
 x+z&\lleq y+z,\qquad\forall z\in B,\\
 \lambda x&\lleq\lambda  y,\qquad\forall \lambda\in \R,\ \lambda\geq 0.
\end{split}
\]
The \emph{positive cone} $P\subseteq B$ is the convex cone $\{x\in B: 0\lleq x\}$, and $x\lleq y$ iff $x-y\in P$.
\end{definition}

At this level of generality, there is no connection between the topology and the lattice structure on $B$. Notice that this may be in contrast with some terminology, where a topological vector lattice usually requires that there is a neighborhood basis for $0$ consisting of solid sets. 

Let us recall that $P\cap -P=\{0\}$ (i.e. $P$ is proper) due to the antisymmetry of $\lleq$ and $P-P=B$ (i.e. $P$ is generating) due to $(B, \lleq)$ being a lattice.
The \emph{order dual} of $B$ is denoted by $B'_\lleq$ and consists of all the real valued linear functionals on $B$ which are bounded on order-bounded sets. Being $B$ a lattice, $B'_\lleq$ is a vector lattice w.r.t. to the ordering induced by the dual cone $P'=\{l\in B'_\lleq: l(x)\geq 0 \ \forall x\in P\}$. In particular, for any $l, m\in B'_\lleq$, the Riesz-Kantorovich formulae hold for any $x\in P$:
\begin{equation}
\label{RKformula}
l\lor m (x):= \sup_{z\in[0,x]}\{l(z)+m(x-z)\},\qquad\qquad l\land m(x):=\inf_{z\in[0,x]}\{l(z)+m(x-z)\}.
\end{equation}
We shall denote by $B^*$ the topological dual of $(B, \tau)$ and by $\langle \cdot, \cdot\rangle:B^*\times B\to \R$ the corresponding duality pairing.
The topological dual convex cone $P^*\subset B^*$ of $P$ is
\[
P^*:=\{x^*\in B^*\ :\ \la x^*,x\ra\geq 0,\ \forall x\in P\},
\]
and we will still denote by  $\lleq$ the partial order structure induced by $P^*$ on $B^*$. We then define the \emph{topological lattice dual} as
\[
B^*_\lleq:= P^*-P^*.
\]
Any $x^*\in P^*$ is bounded on order bounded sets, so that in general it holds
\begin{equation}
\label{Bprec}
B^*_\lleq\subseteq B^*\cap B'_\lleq.
\end{equation}
It is obvious by definition that $P^*$ is weakly$^*$-closed in $B^*$. On the other hand, if $P$ is closed then $B^*_\lleq$ is weakly$^*$-dense. Indeed, if $x\in B$ is such that $\la x^*, x\ra=0$ for any $x^*\in B^*_\lleq$, then in particular $\la x^*, x\ra\geq 0$ for every $x^*\in P^* $, which by the bipolar theorem  gives $x\in P$, and $\la x^*, x\ra\geq 0$ for any $x^*\in -P^*$, so that $x\in -P$ and hence $x\in P\cap-P=\{0\}$. 

It turns out that $B^*_\lleq$ is also a vector lattice. To check this, it suffices to show that the Riesz-Kantorovich formulae \eqref{RKformula} provide continuous linear functionals. If $x^*=x_1^*-x_2^*$ and $y^*=y_1^*-y_2^*$ with $x_i^*$, $y_i^*\in P^*$ for $i=1,2$, then for any $0\lleq z\lleq x$ it holds
\[
\la x^*, z\ra+\la y^*, x-z\ra\leq \la x_1^*, z\ra+\la y_1^*, x-z\ra\leq  \la x_1^*+y_1^*, x\ra
\]
and similarly
\[
\la x^*, z\ra+\la y^*, x-z\ra\geq -\la x_2^*+y_2^*, x\ra.
\]
Therefore both $x^*\lor y^*$ and $x^*\land y^*$ are (topologically) bounded linear functionals on $P$, which have unique continuous extension to the whole $B$ due to $P$ being generating.

Although unnecessary in our discussion, we remark that if $P'\subseteq B^*$, then $B'_\lleq\subseteq B^*$ and thus equality holds in \eqref{Bprec}. By Proposition 2.16, Chapter 2 in \cite{Peressini67} this is the case, for example, if $(B, \tau)$ is a complete metrizable t.v.s. of second category and $P$ is closed, as in the applications we will propose.

\bigskip

Given a convex function $E:B\to\R\cup\{+\infty\}$ we shall denote by ${\rm dom}\,E\subset B$ the set  $\{x:E(x)<\infty\}$. For  $x_0\in B$, the subdifferential $\partial E(x_0)\subset B^*$ of $E$ at $x_0$ is defined to be the empty set if $x_0\notin{\rm dom}\, E$ and otherwise as the (possibly empty) set of elements $x^*\in B^*$  such that
\[
\langle x^*, x-x_0\rangle\leq E(x)-E(x_0)\quad \forall x\in B.
\]
The \emph{domain} of $\partial E$ is defined as ${\rm dom}(\partial E)=\{x\in B:\partial E(x)\neq \emptyset\}$. 

Given a lattice $(S,\lleq)$, a function $E:(S, \lleq )\to \R\cup\{+\infty\}$ is said to be {\em submodular} provided
\beq
\label{order}
E(x\land y)+E(x\lor y)\leq E(x)+E(y), \qquad \qquad \forall x, y\in S.
\eeq
We shall be interested in topological vector lattices and functionals $E$ which are both convex and submodular. Notice that for such $E$'s the subdifferential satisfies the following variant of the classical monotonicity property, known as $\cal T$-monotonicity:
\beq
\label{monot}
\langle x^*-y^*, (x-y)\lor 0\rangle\geq 0 \qquad \forall x^*\in \partial E(x), \ y^*\in \partial E(y),
\eeq
(one says that $\partial E $ is strictly $\cal T$-monotone provided equality in \eqref{monot} implies $x\lleq y$).
Indeed, by definition of subdifferential we have
\[
\begin{split}
\langle x^*, (y-x)\land 0\rangle\leq E(x+(y-x)\land 0)-E(x)=E(x\land y)-E(x),\\
\langle y^*, (x-y)\lor  0\rangle\leq E(y+(x-y)\lor 0)-E(y)=E(x\lor y)-E(y),
\end{split}
\]
so that adding up the inequalities and noticing that
\[
\langle x^*, (y-x)\land 0\rangle+\langle y^*, (x-y)\lor  0\rangle=\langle x^*-y^*, (y-x)\land 0\rangle
\]
we get the claim. The same argument shows that for convex $E$'s with ${\rm dom}(\partial E)=B$, the $\cal T$-monotonicity property \eqref{monot} yields the submodularity \eqref{order}.

\begin{remark}\label{submodsmooth}{\rm It might be useful to recall that if $f$ is a smooth function defined on $\R^d$ and the latter is endowed with its natural lattice structure given by $x\lleq y$ if all the components of $x$ are $\leq$ than the corresponding ones of $y$, then $f$ is submodular if and only if
\[
\frac{\partial^2f}{\partial x_i\partial x_j}(x)\leq 0,\qquad\forall x\in \R^d,\ i,j=1,\ldots,d,\ i\neq j.
\]

}\fr\end{remark}
We conclude this section recalling a basic result in convex analysis we shall use in our proof of the Lewy-Stampacchia inequality, see Theorem 2.9.1 in  \cite{Zalinescu} for a proof.
\begin{theorem}\label{convex}
Let $B$ be a Hausdorff locally convex topological vector space, $C\subset B$ a convex set and $E:B\to\R\cup\{+\infty\}$ a convex function. Assume that either ${\rm dom}\, E\cap {\rm int}\, C\neq\emptyset$ or that there exists $x\in {\rm dom}\, E\cap C$ where $E$ is continuous.

Then $\bar x\in C$ realizes the minimum of $E$ in $C$ if and only if there exists $x^*\in\partial E(\bar x)$ such that
\[
\la x^*,x-x_0\ra\geq 0,\qquad\forall x\in C.
\]
\end{theorem}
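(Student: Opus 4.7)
The plan is to reduce the constrained minimization to an unconstrained one by absorbing the constraint into the indicator function, and then extract the multiplier via a subdifferential sum rule.

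The easy direction $(\Leftarrow)$ is immediate from the definition of the subdifferential: if $x^*\in\partial E(\bar x)$ and $\langle x^*, x-\bar x\rangle\geq 0$ for every $x\in C$, then for any $x\in C$
\[
E(x)-E(\bar x)\;\geq\;\langle x^*, x-\bar x\rangle\;\geq\;0,
\]
so $\bar x$ minimizes $E$ on $C$. This needs no regularity hypothesis.

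For the converse, introduce the indicator $I_C\colon B\to\R\cup\{+\infty\}$ defined by $I_C\equiv 0$ on $C$ and $I_C\equiv+\infty$ outside $C$. The function $F:=E+I_C$ is convex and proper, and $\bar x$ minimizes $E$ on $C$ precisely when $\bar x$ is a global minimizer of $F$, which in turn is equivalent to $0\in\partial F(\bar x)$. The normal cone $N_C(\bar x):=\partial I_C(\bar x)$ is exactly $\{y^*\in B^*:\langle y^*,x-\bar x\rangle\leq 0\ \forall x\in C\}$, so if we can establish the Moreau--Rockafellar sum rule
\[
\partial(E+I_C)(\bar x)\;=\;\partial E(\bar x)+N_C(\bar x),
\]
then writing $0=x^*+n^*$ with $x^*\in\partial E(\bar x)$ and $n^*\in N_C(\bar x)$ yields $\langle x^*, x-\bar x\rangle=-\langle n^*, x-\bar x\rangle\geq 0$ on $C$, as desired.

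The main obstacle is precisely this sum rule, which is not true for arbitrary pairs of convex functions: one needs a qualification condition, and the two alternatives in the statement — existence of an interior point of $C$ inside $\mathrm{dom}\,E$, or a point of continuity of $E$ inside $\mathrm{dom}\,E\cap C$ — are the standard ones ensuring it. My approach would be to dualize via Fenchel conjugates and then apply the geometric Hahn--Banach theorem in $B\times\R$ to separate the strict epigraph of the function having nonempty interior (given by the qualification hypothesis) from a translate of the epigraph of the other. The nonempty interior is what allows a non-vertical separating hyperplane, whose slope encodes the multiplier $x^*$. This separation step is the heart of the argument; the rest is bookkeeping.
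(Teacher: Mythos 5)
The paper does not prove this statement at all: it is quoted verbatim (as Theorem \ref{convex}) from Theorem 2.9.1 of \cite{Zalinescu}, so there is no internal proof to compare against. Your reduction is the standard one and is correct: the reverse implication is immediate from the definition of $\partial E(\bar x)$, and the forward implication follows from $0\in\partial(E+I_C)(\bar x)$ once the Moreau--Rockafellar sum rule $\partial(E+I_C)(\bar x)=\partial E(\bar x)+N_C(\bar x)$ is available. You correctly observe that both qualification hypotheses in the statement collapse to the single standard one for the sum rule -- a common point of $\mathrm{dom}\,E\cap\mathrm{dom}\,I_C$ at which one of the two functions is continuous ($I_C$ in the first alternative, since it vanishes on the open set $\mathrm{int}\,C$; $E$ in the second) -- and your sketch of the sum rule via a non-vertical separating hyperplane in $B\times\R$ is the classical argument (one separates the strict epigraph of the function that is continuous at the common point from the \emph{hypograph} of the reflected translate of the other, rather than from an epigraph, but that is exactly the bookkeeping you defer). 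The one cosmetic point worth noting is that the displayed inequality in the statement contains a typo ($x_0$ should read $\bar x$), which your proof silently and correctly repairs.
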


\subsection{Abstract formulation of the Lewy-Stampacchia inequality}

We can now prove a general version of the Lewy--Stampacchia inequality.

\begin{theorem}[Abstract  Lewy-Stampacchia inequality]
\label{LS} Let $(B,\tau,\lleq)$ be a topological vector lattice, and  $E:B\to \R\cup\{+\infty\}$ a convex and submodular functional. Furthermore, let  $\varphi, \psi\in B$ with $\varphi\lleq \psi$ and $\bar u\in B$  a minimizer for $E$ on $[\varphi, \psi]$. 

Assume that  either  ${\rm dom E}\cap{\rm int}\,[\varphi, \psi]\neq\emptyset$ or   that there exists $u\in {\rm dom}\, E\cap  [\varphi,\psi]$ where $E$ is continuous.

Then
\beq
\label{LSin}
\begin{split}
 &\forall w_1^*\in \partial E(\varphi)\cap B^*_\lleq\ \  \exists x_1^*\in \partial E(\bar u)\cap B^*_\lleq\quad\text{ such that }\quad x_1^*\lleq w_1^*\lor 0, \\
&\forall  w_2^*\in \partial E(\psi)\cap B^*_\lleq\ \ \exists x_2^*\in \partial E(\bar u)\cap B^*_\lleq\quad\text{ such that }\quad w_2^*\land 0\lleq x_2^*.
\end{split}
\eeq
\end{theorem}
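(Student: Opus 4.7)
My plan is to prove the first inequality; the second one follows by applying the first to the convex submodular functional $E'(v):=E(-v)$ on $[-\psi,-\varphi]$, with minimizer $-\bar u$ and subgradient $-w_2^*\in\partial E'(-\psi)$. Set $f^*:=w_1^*\lor 0\in P^*$, well defined thanks to the vector-lattice structure of $B^*_\lleq$, and introduce the auxiliary convex functional $\tilde E(v):=E(v)-\langle f^*,v\rangle$; this is still submodular because adding a linear term cancels on both sides of \eqref{order}.

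The heart of the argument is to show that $\bar u$ is a minimizer of $\tilde E$ on $C:=\,]-\infty,\bar u]$. Fix $v\lleq\bar u$. Since $v\lor\varphi\in[\varphi,\bar u]\subseteq[\varphi,\psi]$, minimality of $\bar u$ gives $E(v\lor\varphi)\geq E(\bar u)$. The submodularity of $E$ applied to $(v,\varphi)$ yields $E(v)\geq E(v\lor\varphi)+E(v\land\varphi)-E(\varphi)$, and the subdifferential inequality at $\varphi$ gives $E(v\land\varphi)-E(\varphi)\geq -\langle w_1^*,(\varphi-v)\lor 0\rangle$. Combining, we get
\[
E(v)-E(\bar u)\geq -\langle w_1^*,(\varphi-v)\lor 0\rangle.
\]
The pointwise bound $\bar u-v\ggeq(\varphi-v)\lor 0$, which holds because $\bar u\ggeq\varphi$ and $\bar u\ggeq v$, together with $f^*\ggeq w_1^*$ in $B^*_\lleq$, yields $\langle f^*,\bar u-v\rangle\geq\langle w_1^*,(\varphi-v)\lor 0\rangle$. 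Hence $\tilde E(v)-\tilde E(\bar u)=E(v)-E(\bar u)+\langle f^*,\bar u-v\rangle\geq 0$.

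Applying Theorem \ref{convex} to $\tilde E$ on $C$ then produces $\tilde x^*\in\partial\tilde E(\bar u)=\partial E(\bar u)-f^*$ with $\langle\tilde x^*,v-\bar u\rangle\geq 0$ for all $v\in C$. Testing with $v=\bar u-z$, $z\in P$, yields $\tilde x^*\in -P^*$; writing $\tilde x^*=x_1^*-f^*$ gives the desired $x_1^*\in\partial E(\bar u)$ with $x_1^*\lleq f^*=w_1^*\lor 0$ and $x_1^*\in(-P^*)+P^*\subseteq B^*_\lleq$.

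The main technical obstacle is in this last step: the cone $]-\infty,\bar u]$ may have empty interior, and the continuity point of $E$ in $[\varphi,\psi]$ granted by the hypothesis need not belong to $C$. I would handle this by replacing $C$ with the smaller order-interval $[\varphi,\bar u]\subseteq[\varphi,\psi]$, on which $\bar u$ still minimizes $\tilde E$, and by transferring the regularity hypothesis from $[\varphi,\psi]$ to $[\varphi,\bar u]$ via lattice operations, e.g.\ by replacing a continuity point $u$ of $E$ in $[\varphi,\psi]$ by $u\land\bar u\in[\varphi,\bar u]$.
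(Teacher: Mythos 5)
Your core argument is, modulo presentation, the paper's own. The paper introduces the same auxiliary functional $A_1(u)=E(u)-\la w_1^*\lor 0,u\ra$ and shows that $\bar u$ minimizes it on $]-\infty,\bar u]$ by splitting this into two claims proved by contradiction: $\inf_{]-\infty,\bar u]}A_1=\inf_{[\varphi,\bar u]}A_1$ (using submodularity plus the subdifferential inequality at $\varphi$, exactly your two middle inequalities) and $\inf_{[\varphi,\bar u]}A_1=A_1(\bar u)$ (using minimality of $\bar u$ and $w_1^*\lor 0\in P^*$). Your single direct chain packages the same three ingredients, and your reduction of the second half of \eqref{LSin} to the first via $v\mapsto -v$ is a harmless shortcut where the paper simply repeats the symmetric argument. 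The only points to make explicit are the finiteness bookkeeping ($E(\varphi)<\infty$ since $\partial E(\varphi)\neq\emptyset$; the case $E(v)=+\infty$ is trivial since $\tilde E(\bar u)<\infty$), which the paper does handle.

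Your final paragraph, however, proposes a repair that would destroy the conclusion. If you replace $C=\,]-\infty,\bar u]$ by $C'=[\varphi,\bar u]$, Theorem \ref{convex} only yields some $\tilde x^*\in\partial\tilde E(\bar u)$ with $\la \tilde x^*,u-\bar u\ra\geq 0$ for all $u\in C'$, i.e.\ $\la \tilde x^*,z\ra\leq 0$ only for $z$ in the order interval $[0,\bar u-\varphi]$. That interval is in general not absorbing in $P$ (it degenerates to $\{0\}$ wherever $\bar u=\varphi$, e.g.\ when $\bar u=\varphi$ identically), so you cannot conclude $\tilde x^*\in -P^*$, hence not $x_1^*\lleq w_1^*\lor 0$: the normal cone to $[\varphi,\bar u]$ at $\bar u$ is strictly larger than that of $]-\infty,\bar u]$, and the whole point of the one-sided set is that its normal cone at $\bar u$ is exactly $P^*$. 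The concern you raise about the constraint qualification transferring from $[\varphi,\psi]$ to $]-\infty,\bar u]$ is not unreasonable --- the paper applies Theorem \ref{convex} to $]-\infty,\bar u]$ without comment, and in the intended applications the issue is vacuous because $E$ is continuous on all of $B$, so $\bar u$ itself witnesses the second alternative --- but any genuine fix must keep the full cone of admissible directions $-P$ at $\bar u$ rather than shrink the constraint set.
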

\begin{proof} We start proving the first assertion in \eqref{LSin}. Without loss of generality we assume that $\partial E(\varphi)\cap B^*_\lleq\neq\emptyset$ (thus in particular $E(\varphi)<+\infty$) and pick  $w_1^*\in \partial E(\varphi)\cap B^*_\lleq$.  Consider the convex functional $A_1:B\to\R\cup\{+\infty\}$ defined by
\[
A_1(u):=E(u)-\langle w_1^*\lor 0, u\rangle, \qquad \forall u\in B.
\]
We claim that
\beq
\label{LSclaim1}
\inf_{]-\infty, \bar u]} A_1=\inf_{[\varphi, \bar u]} A_1.
\eeq
The inequality $\leq$ is obvious. To   prove the other one,  it suffices to prove that for any $u\lleq \bar u$ it holds $A_1(u) \geq    A_1(u\lor \varphi)$. Suppose not: then for some $u\lleq \bar u$ it holds
\begin{equation}
\label{eq:tardi}
E(u\lor \varphi)-\langle w_1^*\lor 0, u\lor\varphi\rangle>E(u)-\langle w_1^*\lor 0, u\rangle.
\end{equation}
In particular $E(u)<+\infty$ and using  \eqref{order} we get $E(u\lor\varphi), E(u\land\varphi)<+\infty$. Moreover
\[
E(\varphi)-E(u\land \varphi)\geq E(u\lor \varphi)-E(u)\stackrel{\eqref{eq:tardi}}>\langle w_1^*\lor 0, u\lor \varphi-u\rangle\geq \langle w_1^*, u\lor \varphi-u\rangle,
\]
where in the last inequality we used the fact that $ u\lor\varphi-u\ggeq 0$. Recalling that $u-u\lor\varphi=u\land\varphi-\varphi$ we deduce
\[
E(u\land\varphi)<E(\varphi)+\langle w_1^*, u\land\varphi-\varphi\rangle,
\]
which contradicts $w_1^*\in \partial E(\varphi)$. Thus \eqref{LSclaim1} is proved. Now we claim that
\beq
\label{LSclaim2}
\inf_{[\varphi, \bar u]} A_1=A_1(\bar u),
\eeq
and again we argue by contradiction. Hence suppose that for some $u\in [\varphi, \bar u]$ it holds
\[
E(u)-\langle w_1^*\lor 0, u\rangle<E(\bar u)-\langle w_1^*\lor 0, \bar u\rangle.
\]
Then, being $u\lleq \bar u$ we get
\[
E(\bar u )-E(u)>\langle w_1^*\lor 0, \bar u-u\rangle\geq 0, 
\]
which, since $u\in [\varphi, \bar u]\subseteq [\varphi, \psi]$, contradicts the minimality of $\bar u$ in $[\varphi, \psi]$. 

From \eqref{LSclaim1} and \eqref{LSclaim2} we deduce that  $\bar u$ is a minimum for $A_1$ on the convex set $]-\infty, \bar u]$, therefore by Theorem \ref{convex} we deduce the existence of  $y_1^*\in \partial A_1(\bar u)$ such that
\beq
\label{varin}
\langle y_1^*, u-\bar u\rangle \geq 0, \qquad \forall u\in\  ]-\infty, \bar u].
\eeq
Since $y_1^*\in \partial A_1(\bar u)=\partial E(\bar u)-w_1^*\lor 0$,
there exists $x_1^*\in \partial E(\bar u)$ such that $y_1^*=x_1^*-w_1^*\lor 0$.
Letting $u=\bar u-v$ for arbitrary $v\in P$ in \eqref{varin}, we get
\[
\langle x_1^*-w_1^*\lor 0, v\rangle\leq 0, \qquad \forall v\in P,
\]
proving that $x_1^*- w_1\lor 0^*\in-P^*$, which is the first inequality in \eqref{LSin}.

To prove the other one we consider, for any $w_2^*\in \partial E(\psi)$, the functional
\[
A_2(u): =E(u)-\langle w_2^*\land 0, u\rangle,
\]
and arguing as before we prove that  $\bar u$ minimizes $A_2$ over $[\bar u, +\infty[$, thus getting the conclusion along the same lines just used.
\end{proof}

\section{Applications}

\subsection{Recovering the classical case}
Here we show how the general Theorem \ref{LS} yields the classical formulation of the Lewy-Stampacchia inequality. Let $\Omega\subset \R^d$ be an open set and observe that the Hilbert space $W^{1,2}_0(\Omega)$ endowed with the standard ordering given by pointwise a.e.\ inequality
\[
u\lleq v\qquad\stackrel{\rm def}\Longleftrightarrow\qquad\ u(x)\leq v(x),\quad a.e.\ x\in\Omega,
\] 
is a topological vector lattice. Its topological dual is denoted by $W^{-1,2}(\Omega)$ and its topological lattice dual by $W^{-1,2}_\lleq(\Omega)\subset W^{-1,2}(\Omega)$.

The distributional Laplacian $\Delta u$ of a function $u\in W^{1,2}_0(\Omega)$ acts on smooth compactly supported functions as 
\[
C^\infty_c(\Omega)\ni \eta\qquad\mapsto\qquad\la\Delta u,\eta\ra:= \int_\Omega \Delta\eta \,u\,\d\mathcal L^d,
\]
and since $u\in W^{1,2}_0(\Omega)$ we have 
\[
\left| \int_\Omega \Delta\eta \,u\,\d\mathcal L^d\right|= \left|\int_\Omega \nabla\eta \cdot\nabla u\,\d\mathcal L^d\right|\leq \|\nabla \eta\|_{L^2} \|\nabla u\|_{L^2}\leq\| \eta\|_{W^{1,2}_0(\Omega)} \| u\|_{W^{1,2}_0(\Omega)},
\]
which shows that the distributional Laplacian uniquely extends to the element in  $W^{-1,2}(\Omega)$, still denoted by $\Delta u$, given by
\[
W^{1,2}_0(\Omega)\ni \eta\qquad\mapsto\qquad\la\Delta u,\eta\ra:=- \int_\Omega \nabla\eta \cdot\nabla u\,\d\mathcal L^d.
\]
In this sense, it has a meaning to ask whether $\Delta u\in W^{-1,2}_\lleq(\Omega)$ for some $u\in W^{1,2}_0(\Omega)$. The Lewy-Stampacchia inequality can then  be stated as follows:
\begin{theorem}[Classical Lewy-Stampacchia inequality]\label{LSclassic}
Let $\Omega\subset\R^d$ be a bounded open set and $\varphi,\psi\in W^{1,2}_0(\Omega)$ with $\varphi\leq\psi$ a.e.\ and such that  $\Delta\varphi,\Delta\psi\in W^{-1,2}_\lleq(\Omega)$.

Let $\bar u$ be the minimum of $u\mapsto\int_\Omega|\nabla u|^2\,\d\mathcal L^d$ among all functions $u\in W^{1,2}_0(\Omega)$ such that $\varphi\leq u\leq \psi$ a.e. (whose existence and uniqueness follows by standard means in calculus of variations). Then we have  $\Delta \bar u\in W^{-1,2}_\lleq (\Omega)$  as well with 
\begin{equation}
\label{eq:LSbase}
\Delta\varphi\land 0\lleq\Delta \bar u\lleq \Delta\psi\lor 0.
\end{equation}
\end{theorem}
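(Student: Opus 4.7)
The plan is to apply Theorem \ref{LS} with $B=W^{1,2}_0(\Omega)$ ordered by the a.e.\ pointwise inequality and with the functional $E(u):=\int_\Omega|\nabla u|^2\,\d\mathcal L^d$. First I would check the abstract framework. The Hilbert space $W^{1,2}_0(\Omega)$ is a Hausdorff locally convex t.v.s., and the classical Stampacchia chain rule tells us that the truncations $u\lor v=\max(u,v)$ and $u\land v=\min(u,v)$ belong to $W^{1,2}_0(\Omega)$ whenever $u,v$ do and preserve the zero trace, so $(B,\tau,\lleq)$ is indeed a topological vector lattice. Its topological dual and topological lattice dual are, by definition, $W^{-1,2}(\Omega)$ and $W^{-1,2}_\lleq(\Omega)$.

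Next I would check the required properties of $E$. Convexity is trivial and Fr\'echet differentiability in $W^{1,2}_0$ gives $\partial E(u)=\{-2\Delta u\}\subset W^{-1,2}(\Omega)$ for every $u$. For submodularity I use that at a.e.\ point of $\Omega$ one has either $(\nabla(u\lor v),\nabla(u\land v))=(\nabla u,\nabla v)$ or $(\nabla v,\nabla u)$, which yields the pointwise identity $|\nabla(u\lor v)|^2+|\nabla(u\land v)|^2=|\nabla u|^2+|\nabla v|^2$; integrating shows that $E$ is in fact \emph{modular}, hence submodular. Moreover $\mathrm{dom}\,E=B$ and $E$ is continuous on all of $B$, so in particular $E$ is continuous at $\varphi\in[\varphi,\psi]\cap\mathrm{dom}\,E$, and the continuity hypothesis of Theorem \ref{LS} is met.

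Finally I would invoke Theorem \ref{LS} with $w_1^*:=-2\Delta\varphi$ and $w_2^*:=-2\Delta\psi$, which belong to $\partial E(\varphi)\cap W^{-1,2}_\lleq(\Omega)$ and $\partial E(\psi)\cap W^{-1,2}_\lleq(\Omega)$ respectively by the hypothesis $\Delta\varphi,\Delta\psi\in W^{-1,2}_\lleq(\Omega)$ (the lattice dual is a vector space). The theorem produces $x_i^*\in\partial E(\bar u)\cap W^{-1,2}_\lleq(\Omega)$, but since $\partial E(\bar u)=\{-2\Delta\bar u\}$ this forces $-2\Delta\bar u\in W^{-1,2}_\lleq(\Omega)$, i.e.\ $\Delta\bar u\in W^{-1,2}_\lleq(\Omega)$, and gives
\[
-2\Delta\bar u\lleq(-2\Delta\varphi)\lor 0,\qquad (-2\Delta\psi)\land 0\lleq-2\Delta\bar u.
\]
Dividing by $2$ (which preserves the ordering) and using the identity $-(a\land b)=(-a)\lor b$, valid in any vector lattice, these two inequalities become exactly $\Delta\varphi\land 0\lleq\Delta\bar u\lleq\Delta\psi\lor 0$, which is \eqref{eq:LSbase}. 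There is no real obstacle here: the only point requiring minimal care is the sign-bookkeeping between the Fr\'echet differential $DE(u)=-2\Delta u$ and the distributional Laplacian, plus the elementary modularity identity for Dirichlet energies.
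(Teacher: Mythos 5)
Your proposal is correct and follows essentially the same route as the paper: verify that $W^{1,2}_0(\Omega)$ is a topological vector lattice, that the Dirichlet energy is convex, continuous and in fact modular (the paper phrases this via locality of the gradient, $\nabla u=\nabla v$ a.e.\ on $\{u=v\}$, which is the same fact underlying your pointwise identity), identify the subdifferential with the distributional Laplacian, and apply Theorem \ref{LS}. The only blemish is the stated identity $-(a\land b)=(-a)\lor b$, which should read $-(a\land b)=(-a)\lor(-b)$; your sign bookkeeping is otherwise consistent, and the factor of $2$ coming from your normalization of $E$ is harmless.
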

\begin{proof}
The functional $E:W^{1,2}_0(\Omega)\to[0,\infty)$ given by $E(u):=\frac12\int_\Omega|\nabla u|^2$ is clearly convex and  continuous. Moreover, $E$ is submodular (actually, with equality holding in \eqref{order} for every $u,v\in W^{1,2}_0(\Omega)$) as a consequence of the locality property of the gradient:
\[
\nabla u=\nabla v,\qquad \mathcal L^d\text{-a.e.\ on}\ \{u=v\}.
\]
Now observe that the subdifferential of $E$ at $u\in W^{1,2}_0(\Omega)$ is nothing but $-\Delta u\in W^{-1,2}(\Omega)$. Indeed, the trivial inequality
\[
E(u)-\la\Delta u,\eta\ra \leq E(u+\eta),\qquad\forall u,\eta\in W^{1,2}_0(\Omega),
\]
shows that $-\Delta u\in \partial E(u)$ and conversely  testing the subdifferential inequality with $\eps\eta$ for arbitrary $\eta\in W^{1,2}_0(\Omega)$ and then letting $\eps\to 0$ we see that  $-\Delta u$ is the only element in $\partial E(u)$.

The conclusion then comes applying Theorem \ref{LS}.
\end{proof}
Some comments:
\begin{itemize}
\item[i)] The assumption that the obstacles $\varphi,\psi$ have 0 boundary data has been made to simplify the exposition but is in fact unnecessary, see Remark \ref{rem:dir} for some details on how to remove it.

\item[ii)] We stated the thesis in \eqref{eq:LSbase} as an inequality between linear functionals on $W^{-1,2}(\Omega)$. Equivalently, one can interpret it as inequality between measures, due to the fact that elements of the space $W^{-1, 2}_\lleq(\Omega)$  can be faithfully represented as Radon measures. This can be achieved either calling into play the notions of capacity, polar sets and representatives quasi-everywhere defined of  Sobolev functions (see e.g. Chapter 3 of \cite{MaRockner92}), or along the following lines. 

Consider a positive functional $L\in W^{-1, 2}_\lleq(\Omega)$. By restriction it  defines a positive linear functional on ${\rm Lip}_c(\Omega)\subseteq W^{1,2}(\Omega)$ and since for every non-negative $f\in C_c(\Omega)$ there exists  $g\in {\rm Lip}_c(\Omega)$ such that $f(x)\leq g(x)$ for every $x\in\Omega$, such positive linear functional can be uniquely extended to a positive linear functional on $C_c(\Omega)$ (see also the general construction in   Corollary 2.8, Chapter 2 in \cite{Peressini67}). By the Riesz represetation theorem  we get that there exists a non-negative Radon measure $\mu_L$ on $\Omega$ such that 
\[
L(u)=\int_\Omega u\, \d\mu_L \qquad \forall u\in {\rm Lip}_c(\Omega),
\]
and such $\mu_L$ is \emph{unique} by the density of ${\rm Lip}_c(\Omega)$ in $C_c(\Omega)$.  Clearly then, there is a well defined (linear) map $W^{-1,2}_\lleq(\Omega)\ni L\mapsto \mu_L\in {\cal M}(\Omega)$ where we denoted with ${\cal M}(\Omega)$ the set of Radon measures on $\Omega$. We say that this representation is faithful in the sense that the map $L\mapsto \mu_L$ is \emph{injective}, being ${\rm Lip}_c(\Omega)$ strongly dense in $W^{1,2}_0(\Omega)$.
 
Due to this discussion, we will sometime shortly say that the elements of $W^{-1,2}_\lleq(\Omega)$``are" measures.
\item[iii)] Although the  Lewy-Stampacchia inequality can be certainly stated for smooth obstacles, in fact it is more natural - and evidently more general - to formulate it as in the statement we gave, i.e.\ for obstacles having measure valued distributional Laplacian, the latter being intended as in point (ii) above.  It is for this reason that the topological vector lattice considered has been $W^{1,2}_0(\Omega)$ rather than $L^2(\Omega)$. Indeed, convex functionals in $L^2$ have subdifferential which, by definition, must act continuously on $L^2$ functions, which is certainly not the case for a generic measure-valued distributional Laplacian of a Sobolev function. 

In the present case, the version with measure-valued Laplacian could in fact be obtained from the case of smooth obstacles with a quite standard approximation/convergence argument, so that this distinction might be not so relevant. It becomes instead crucial on metric measure spaces, where approximation procedures are not easily available, and in fact the study of the double obstacle problem has as primary goal the one of building `smooth' functions.

\end{itemize}

\subsection{The fractional Laplacian}
We now show how to deduce from Theorem \ref{LS} the Lewy-Stampacchia inequality for the fractional Laplacian, thus reproducing a result already appeared in \cite{Servadei-Valdinoci13} with a simplified argument.

Let  $\Omega\subset \R^d$ be  an open subset, $s\in(0,1)$ and  the space $W^{s,2}_0(\Omega)$ be defined as the closure of $C^\infty_c(\Omega)$ w.r.t.\  the norm
\[
\| u\|_{W^{s,2}_0(\Omega)}^2:=\|u\|_{L^2}^2+\int_{\R^d\times \R^d} \frac{|u(x)-u(y)|^2}{|x-y|^{d+2s}}\,\d x\, \d y.
\]
Clearly, $W^{s,2}_0(\Omega)$ is a lattice w.r.t.\ the a.e.\ ordering  and a Hilbert space with the latter norm, with dual denoted by $W^{-s,2}(\Omega)$ and order dual by $W^{-s,2}_\lleq(\Omega)$. As a general reference for this space and related ones see Chapter 1, Section 5 of \cite{Triebel01}. Notice that, with the same arguments of the previous section, one can see that functionals in $W^{-s,2}_\lleq(\Omega)$  can be faithfully represented as integral w.r.t.\ appropriate Radon measures.

If $\Omega$ is bounded, the functional $E:W^{s,2}_0(\Omega)\to \R$ given by
\[
E(u):=\frac 1 2\int_{\R^d\times\R^d} \frac{|u(x)-u(y)|^2}{|x-y|^{d+2s}}\,\d x\, \d y,
\]
is convex, continuous and coercive (see \cite{Servadei-Valdinoci13} for this latter property). Its subdifferential is related to the fractional Laplacian via the identity
\[
\partial E(u)=(-\Delta)^s u,\qquad\forall u\in W^{s,2}_0(\Omega),
\]
we refer to \cite{DNPV} for the definition and basic properties of the fractional Laplacian.

We claim that $E$ is submodular. To prove this it is sufficient to show that
\[
(x_1-x_2)^2+(y_1-y_2)^2\geq (x_1\lor y_1-x_2\lor y_2)^2+(x_1\land y_1-x_2\land y_2)^2,
\]
for any  $x_1, x_2, y_1, y_2\in \R$. More generally, we shall prove that for every $f:\R\to\R$ convex we have
\begin{equation}
\label{eq:sub2}
f(x_1-x_2)+f(y_1-y_2)\geq f(x_1\lor y_1-x_2\lor y_2)+f(x_1\land y_1-x_2\land y_2),
\end{equation}
for any $ x_1, x_2, y_1, y_2\in \R$. To this aim, let  $g:\R^2\to\R$ given by $g(x_1,x_2)=f(x_1-x_2)$ and endow  $\R^2$  with its natural lattice structure given by component-wise ordering. If $f$ is smooth, then   the identity $\frac{\d^2}{\d x_1\d x_2}g(x_1,x_2)=-\frac{\d^2}{\d x^2}f(x_1-x_2)\leq 0$ and Remark \ref{submodsmooth} show that $g$ is submodular, which is equivalent to the validity of  \eqref{eq:sub2} for any $ x_1, x_2, y_1, y_2\in \R$. The general case follows by approximation.

Collecting together these observations and using Theorem \ref{LS} we deduce:
\begin{theorem}[Lewy-Stampacchia inequality for the fractional Laplacian]
Let $\Omega\subseteq \R^d$ be open $\varphi, \psi\in W^{s, 2}_0(\Omega)$ and $\bar u$ a minimizer for $E$ over $[\varphi, \psi]\neq \emptyset$. Assume that  $(-\Delta)^s\varphi, (-\Delta)^s\psi\in W^{-s,2}_\lleq(\Omega)$. Then $ (-\Delta)^s \bar u\in W^{-s,2}_\lleq(\Omega)$ with
\[
(-\Delta)^s\psi\land 0\lleq (-\Delta)^s \bar u\lleq (-\Delta)^s \varphi\lor 0.
\]
\end{theorem}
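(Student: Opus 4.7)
The plan is to verify the hypotheses of the abstract Theorem \ref{LS} for $B=W^{s,2}_0(\Omega)$ equipped with the pointwise a.e.\ order, and let the conclusion fall out automatically. First, $B$ with its Hilbert norm and a.e.\ order is a topological vector lattice, and the energy $E$ is a continuous (quadratic) convex functional on all of $B$; thus the continuity hypothesis of Theorem \ref{LS} is trivially satisfied at any point of the (nonempty) order interval $[\varphi,\psi]$.

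The crucial step is the submodularity of $E$. I would obtain it from the pointwise inequality
\[
(x_1-x_2)^2+(y_1-y_2)^2 \;\geq\; (x_1\lor y_1-x_2\lor y_2)^2+(x_1\land y_1-x_2\land y_2)^2
\]
for $x_1,x_2,y_1,y_2\in\R$, which is the $f(t)=t^2$ case of \eqref{eq:sub2} and follows from Remark \ref{submodsmooth} applied to $g(x_1,x_2)=f(x_1-x_2)$ on $\R^2$. Applying it pointwise to $(u(x),v(x),u(y),v(y))$ and integrating against the nonnegative kernel $|x-y|^{-d-2s}$ yields $E(u\lor v)+E(u\land v)\leq E(u)+E(v)$, i.e.\ the submodularity \eqref{order}.

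Finally I would identify the subdifferential: being $E$ a continuous convex quadratic form with Gateaux derivative $\eta\mapsto \int\int \frac{(u(x)-u(y))(\eta(x)-\eta(y))}{|x-y|^{d+2s}}\,\d x\,\d y$, the set $\partial E(u)$ is the singleton consisting of the (normalized) fractional Laplacian $(-\Delta)^s u\in W^{-s,2}(\Omega)$. The hypotheses thus read $\partial E(\varphi),\partial E(\psi)\subset W^{-s,2}_\lleq(\Omega)=B^*_\lleq$, and Theorem \ref{LS} (applied with $w_1^*=(-\Delta)^s\varphi$ and $w_2^*=(-\Delta)^s\psi$) delivers $x_1^*,x_2^*\in\partial E(\bar u)\cap B^*_\lleq$ with $x_1^*\lleq w_1^*\lor 0$ and $w_2^*\land 0\lleq x_2^*$. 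Since $\partial E(\bar u)=\{(-\Delta)^s\bar u\}$, both $x_i^*$ coincide with $(-\Delta)^s\bar u$, which is therefore in $W^{-s,2}_\lleq(\Omega)$ and satisfies the claimed chain of inequalities. The only nontrivial verification in this scheme is the submodularity; this is precisely where the abstract viewpoint pays off compared to a direct proof, as it sidesteps any need for strict $\cal T$-monotonicity or delicate subdifferential estimates on the nonlocal operator $(-\Delta)^s$.
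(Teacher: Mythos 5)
Your proposal is correct and follows essentially the same route as the paper: verify continuity and convexity of $E$, establish submodularity from the pointwise inequality $(x_1-x_2)^2+(y_1-y_2)^2\geq (x_1\lor y_1-x_2\lor y_2)^2+(x_1\land y_1-x_2\land y_2)^2$ (obtained via Remark \ref{submodsmooth} applied to $g(x_1,x_2)=f(x_1-x_2)$) by integrating against the nonnegative kernel, identify $\partial E(u)=\{(-\Delta)^s u\}$, and invoke Theorem \ref{LS}. The only cosmetic difference is that you specialize at once to $f(t)=t^2$, which is smooth, so you do not need the approximation step the paper uses to cover general convex $f$.
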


\begin{remark}{\rm
An analogous statement holds for arbitrary summability index $p>1$ on the derivative, thus giving a Lewy--Stampacchia inequality for the fractional $p$-Laplacian: inequality \eqref{eq:sub2} with $f(x):=|x|^p$ grants the submodularity of the corresponding functional
\[
E(u):=\frac 1p\int_{\R^d\times\R^d} \frac{|u(x)-u(y)|^p}{|x-y|^{d+ps}}\,\d x\, \d y.
\]
}\fr\end{remark}

\begin{remark}\label{rem:dir}{\rm
The assumption that  $\varphi, \psi$   in Theorem \ref{LS} have 0 boundary data has been made only to simplify the statement. For classical obstacle problems with Dirichlet boundary conditions the naturally available obstacles need not vanish on the boundary of the domain, and may fail to belong, in our abstract setting, to the minimization space $B$. Nevetheless, a minimizer for $E$ over $[\phi, \psi]$ still satisfies a form the Lewy--Stampacchia inequality. Suppose that $E$ is naturally defined on a bigger topological vector lattice $\tilde B$ with continuous, order preserving embedding $B\hookrightarrow \tilde B$, and $\varphi, \psi\in \tilde B$. Consider the subdifferential of $E$ w.r.t.\  to $B$ defined as
\[
\partial_BE(u):=\partial G_u(0)\subseteq B^*, \qquad B\ni v\mapsto G_u(v):=E(v+u).
\]
Using this notion, Theorem \ref{LS} holds with obvious modifications. A similar procedure can be exploited to deal with non-homogeneous boundary conditions, see \cite{Servadei-Valdinoci13} for some examples of this transition from an abstract result to concrete applications.
}\fr\end{remark}

\subsection{The case of metric measure spaces}
We shall now discuss the case of metric measure structures and how to use the Lewy-Stampacchia inequality to build functions with bounded Laplacian on spaces with a lower bound on the Ricci curvature. In the next section we are going to recall those concepts and results that we shall need without giving full details about relevant definitions. This choice is made to keep the presentation short; we refer to the bibliographical references for all the necessary details. 
\subsubsection{Preliminary notions}

For the purpose of the discussion here, a {\bf metric measure space} is a triple $(X,\sfd,\mm)$ where $(X,\sfd)$ is a complete and separable metric space and $\mm$ is a non-negative Radon measure on it which gives positive mass to every open set.

Given such a m.m.\ space, there is an established definition of the {\bf Sobolev space} $W^{1,2}(X,\sfd,\mm)$, see for instance  \cite{Heinonen07} and \cite{AmbrosioGigliSavare11-3} and references therein. To any $f\in W^{1,2}(X,\sfd,\mm)$ it is associated a function $|Df|\in L^2(X,\mm)$ called minimal weak upper gradient which reduces to  the modulus of the distributional differential when the base space is the Euclidean one. Among others, a basic property of minimal weak upper gradients is their locality, i.e.\ for every $f,g\in W^{1,2}(X,\sfd,\mm)$ we have
\begin{equation}
\label{eq:localgrad}
|Df|=|Dg|,\qquad\mm\text{-a.e.\ on}\ \{f=g\}.
\end{equation}
On proper spaces, one can use this property to define the space $W^{1,2}_{\rm loc}(\Omega)$ for $\Omega\subset X$ open as the subset of $L^2_{\rm loc}(\Omega)$ made of functions $f$ such that $\nchi f\in W^{1,2}(X,\sfd,\mm)$ for every $\nchi\in {\rm Lip}_c(\Omega)$. For $f\in W^{1,2}_{\rm loc}(\Omega)$ the map $|Df|\in L^2_{\rm loc}(\Omega)$ is then defined by
\[
|Df|:=|D(\nchi f)|,\qquad\mm\text{-a.e.\ on}\ \{\nchi =1\},
\]
and the space $W^{1,2}(\Omega)$ is the space of $f\in W^{1,2}_{\rm loc}(\Omega)\cap L^2(\Omega)$ such that $|Df|\in L^2(\Omega)$.

The space $W^{1,2}(X,\sfd,\mm)$ is a Banach space w.r.t.\ the norm $\|f\|_{W^{1,2}}^2:=\|f\|_{L^2}^2+\||Df|\|_{L^2}^2$ and
the energy functional $E:W^{1,2}(X,\sfd,\mm)\to[0,\infty)$ is given by
\[
E(f):=\frac12\int_X|Df|^2\,\d\mm.
\]
We say that $(X,\sfd,\mm)$ is {\bf infinitesimally strictly convex} provided $E:W^{1,2}(X,\sfd,\mm)\to\R$ is differentiable, or equivalently provided for every $f,g\in W^{1,2}(X,\sfd,\mm)$ the limit
\[
\lim_{\eps\to 0}\frac{|D(g+\eps f)|^2-|Dg|^2}{2\eps}
\]
exists in $L^1(X,\mm)$ and {\bf infinitesimally Hilbertian} provided
\[
E(f+g)+E(f-g)=2E(f)+2E(g),\qquad\forall f,g\in W^{1,2}(X,\sfd,\mm),
\]
or equivalently if $W^{1,2}(X,\sfd,\mm)$ is an Hilbert space (see \cite{Gigli12}). It is easy to see that infinitesimally Hilbertian spaces are infinitesimally strictly convex.

Given $\Omega\subset X$ open, the space $W^{1,2}_0(\Omega)$ is the closed subspace of $W^{1,2}(X,\sfd,\mm)$ made of functions which are $\mm$-a.e. 0 outside $\Omega$. Clearly it is canonically and continuously embedded in $W^{1,2}(\Omega)$ and is a lattice w.r.t.\  the usual a.e.\ ordering. We denote as usual by $W^{-1,2}(\Omega)$ its topological dual and by $W^{-1,2}_\lleq(\Omega)$ its topological order dual. As in the Euclidean case, functionals  in $W^{-1,2}_\lleq(\Omega)$ can be represented as measures: the discussion is the same we did in point $(ii)$  after Theorem \ref{LSclassic}, the only difference is that at this level of generality it is not known whether ${\rm Lip}_c(\Omega)$ is dense in $W^{1,2}_0(\Omega)$, which a priori might raise troubles to prove the faithfulness of the representation. Yet, the same argument can be carried out noticing that
\begin{equation}
\label{eq:densevariant}
\text{a positive continuous functional on $W^{1,2}_0(\Omega)$ which is 0 on ${\rm Lip}_c(\Omega)$ is identically 0.}
\end{equation}
Indeed, for every bounded $f\in W^{1,2}_0(\Omega)$ with compact support (= $f$ is 0 $\mm$-a.e.\ outside a certain compact) there are $g_1,g_2\in {\rm Lip}_c(\Omega)$ such that $g_1\leq f\leq g_2$ $\mm$-a.e., so that any functional as in \eqref{eq:densevariant} must be 0 on $f$. Then \eqref{eq:densevariant} follows noticing that, by  standard truncation and (Lipschitz) cut-off arguments,  the subspace  of $W^{1,2}_0(\Omega)$ made of functions bounded and with compact support is strongly dense in $W^{1,2}_0(\Omega)$.

Given $f\in W^{1,2}(X,\sfd,\mm)$ and $\Omega\subset X$ open, the map $E_{f,\Omega}:W^{1,2}_0(\Omega)\to \R$ given by
\[
E_{f,\Omega}(g):=\frac12\int_\Omega |D(f+g)|^2\,\d\mm,
\]
is convex and continuous and, on infinitesimally strictly convex spaces, it is also differentiable. In this latter case we say that $f\in W^{1,2}(X,\sfd,\mm)$ has {\bf measure valued distributional Laplacian} in $\Omega$ provided the only element in $\partial E_{f,\Omega}(0)\subset W^{-1,2}(\Omega)$ belongs to $W^{1,2}_\lleq(\Omega)$ and in this case we write $f\in D(\bd,\Omega)$. The discussion made before shows that this definition is equivalent to the one proposed in \cite{Gigli12} and we shall denote the measure representing $-\partial E_{f,\Omega}(0)$ as $\bd f\restr\Omega$, or simply $\bd f$ in case $\Omega=X$.

\bigskip

For the definition of the {\bf Curvature-Dimension condition} $\CD^*(K,N)$ we refer to \cite{BacherSturm10} (see also \cite{Sturm06II} for the `original' $\CD(K,N)$ condition). One of the main results obtained in  \cite{Gigli12} (see also \cite{Gigli-Mosconi14} for a simplified proof in the infinitesimally Hilbertian case) is the Laplacian comparison estimate for the squared distance on $\CD^*(K,N)$ spaces. For the purposes of the discussion here, it is sufficient to recall it in the following suboptimal form. Recall that for given $\psi:X\to\R\cup\{\pm\infty\}$ and $t>0$, the function $Q_t\psi
:X\to\R\cup\{-\infty\}$ is defined as
\[
Q_t\psi(x):=\inf_{y\in X}\frac{\sfd^2(x,y)}{2t}+\psi(y),
\]
that the $c$-transform $\psi^c$ is defined as $\psi^c:=Q_1(-\psi)$, that $\varphi:X\to\R\cup\{-\infty\}$ is said $c$-concave provided it is not identically $-\infty$ and $\varphi=\psi^c$ for some $\psi:X\to\R\cup\{-\infty\}$.
\begin{theorem}[Laplacian comparison estimates] \label{laplcomp} For given $K\in\R$ and $N\in[1,\infty)$  there is a continuous function $\ell_{K,N}:[0,\infty)\to[0,\infty)$ such that the following holds. 

Let  $(X,\sfd,\mm)$ be an infinitesimally strictly convex $\CD^*(K,N)$ space. Then for every $c$-concave function $\varphi\in {\rm Lip}(X)\cap W^{1,2}(X,\sfd,\mm)$  we have $\varphi\in D(\bd,X)$ with 
\[
\bd\varphi\leq \ell_{K,N}({\rm Lip}(\varphi))\,\mm.
\]
\end{theorem}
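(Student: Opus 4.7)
The plan is to rely on optimal transport, exploiting the well-known fact that a $c$-concave Lipschitz function $\varphi$ is a Kantorovich potential for a suitable $W_2$-optimal transport problem. First, I would reduce the claim to showing that for every non-negative bounded $\eta \in W^{1,2}_0(X)$ with compact support one has
\[
-\int \langle \nabla\varphi, \nabla\eta\rangle \, d\mm \le \ell_{K,N}(\mathrm{Lip}(\varphi)) \int \eta\, d\mm,
\]
since by infinitesimal strict convexity this implies that the unique element of $\partial E_{\varphi,X}(0)$ lies in $W^{-1,2}_\lleq(X)$ and is bounded above by $\ell_{K,N}(\mathrm{Lip}(\varphi))\,\mm$ as a measure, which is exactly the conclusion.

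Next, for each such test function I would fix a reference probability density $\rho_0 \in L^\infty(X,\mm)$ with compact support (eventually tuned to $\eta$) and consider the $W_2$-geodesic $(\mu_t)_{t \in [0,1]}$ starting from $\mu_0 := \rho_0\,\mm$ whose Kantorovich potential is $\varphi$. Because $\varphi$ is $c$-concave and Lipschitz, the interpolation $(\mu_t)$ is well-defined and its $W_2$-speed is controlled by $\mathrm{Lip}(\varphi)$. Applying the $\CD^*(K,N)$ condition along $(\mu_t)$ yields the concavity inequality for the R\'enyi entropy $S_N(\mu_t) := -\int \rho_t^{1-1/N}\,d\mm$, with distortion coefficients $\tau^{(t)}_{K/N,N}(W_2(\mu_0,\mu_1))$ depending only on $K$, $N$ and on a quantity bounded by $\mathrm{Lip}(\varphi)$.

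Then I would take the first-order expansion in $t$ of this inequality as $t \to 0^+$: the right-hand side contributes a term of the form $C(K,N,\mathrm{Lip}(\varphi)) \int \rho_0\, d\mm$, whereas the derivative of $S_N$ at $t = 0$ can, thanks to infinitesimal strict convexity, be identified with a quantity of the form $-\int \langle \nabla\varphi, \nabla\rho_0\rangle\,\rho_0^{-1/N}\, d\mm$, up to weights depending on $\rho_0$. Choosing $\rho_0$ of the form $c_\delta(1 + \delta\eta)$ localized on a sufficiently large ball and letting $\delta \to 0$ isolates the contribution of $\eta$ and produces the sought inequality with an explicit $\ell_{K,N}$; continuity of $\ell_{K,N}$ in $\mathrm{Lip}(\varphi)$ is inherited from the continuity of the distortion coefficients.

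The main obstacle I expect is the rigorous justification of the differentiation of $S_N(\mu_t)$ at $t = 0^+$ and the identification of this derivative with an integral involving $\nabla\varphi$. This requires showing that $\mu_t$ stays absolutely continuous for small $t > 0$, obtaining enough uniform control on $\rho_t$ to expand $\rho_t^{1-1/N}$ to first order, and invoking infinitesimal strict convexity to link the transport derivative to the weak gradient of $\varphi$. These are precisely the delicate technical points carried out in \cite{Gigli12} and, more cleanly in the infinitesimally Hilbertian setting, in \cite{Gigli-Mosconi14}.
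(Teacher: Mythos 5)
The paper does not actually prove Theorem \ref{laplcomp}: it is recalled, in an explicitly ``suboptimal form'', from \cite{Gigli12}, with \cite{Gigli-Mosconi14} cited for a simplified proof in the infinitesimally Hilbertian case. So your proposal is not an alternative to an argument contained in the paper; it is a sketch of the strategy of the cited proof, and you yourself defer its two genuinely hard ingredients --- the rigorous one-sided differentiation of $S_N(\mu_t)$ at $t=0^+$ and the identification of that ``horizontal'' derivative with a ``vertical'' one involving the weak gradient of $\varphi$ --- to those same references. Since that identification (via plans representing gradients, absolute continuity of $\mu_t$ for small $t$, etc.) is essentially the whole analytic content of the theorem, what you have written is a plan rather than a proof.

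Moreover, the one step you do spell out concretely does not work as stated. With $\rho_0=c_\delta(1+\delta\eta)$ supported in a large ball $B_R$, the $\eta$-contribution on the left-hand side of the linearized entropy inequality is of order $\delta$, since $\nabla\rho_0^{1-1/N}\approx c_\delta^{1-1/N}\bigl(1-\tfrac1N\bigr)\delta\,\nabla\eta$; but the right-hand side obtained by differentiating the $\CD^*(K,N)$ convexity inequality at $t=0$ is controlled by quantities of the type $\int\rho_0^{1-1/N}\,\d\mm\approx c_\delta^{1-1/N}\mm(B_R)$ and $\mm(\supp\mu_1)^{1/N}$, which are of order one in $\delta$. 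Dividing by $\delta$ and letting $\delta\to0$ therefore produces no bound at all. The substitution that actually closes the argument in \cite{Gigli12} is non-perturbative: one takes $\rho_0$ proportional to $\eta^{N/(N-1)}$, so that $\rho_0^{1-1/N}$ is proportional to the test function $\eta$ itself and both sides of the inequality scale with $\int\eta\,\d\mm$ (and one must avoid sharp truncations such as $\chi_{B_R}$, since $\rho_0^{1-1/N}$ has to be an admissible Sobolev test function). Finally, the existence of a $W_2$-geodesic $(\mu_t)$ issuing from an arbitrary $\mu_0=\rho_0\mm$ and admitting the given $c$-concave $\varphi$ as a Kantorovich potential is itself a construction (measurable selection of geodesics plus verification of optimality) that you should not take for granted in this generality.
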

This result and the Lewy-Stampacchia inequality are sufficient to build cut-off functions with compact support and bounded Laplacian in infinitesimally strictly convex $\CD^*(K,N)$ spaces. We shall also recall the following fact about evolution of Kantorovich potentials along a $W_2$-geodesic in metric spaces, referring to  Theorem 7.36 in \cite{Villani09} or Theorem 2.18 in \cite{AmbrosioGigli11} for a proof. Recall that given $\mu,\nu\in\probt X$, a function $\varphi:X\to\R\cup\{-\infty\}$ is said Kantorovich potential from $\mu$ to $\nu$ provided it is $c$-concave and a maximizer for the dual problem of optimal transport.
\begin{proposition}[Evolution of Kantorovich potentials]\label{prop:kantev}
Let $(X,\sfd)$ be a metric space, $(\mu_t)\subset \probt X$ a $W_2$-geodesic and $\varphi: X\to\R\cup\{-\infty\}$ a Kantorovich potential from $\mu_0$ to $\mu_1$.

Then for every $t\in [0,1]$:
\begin{itemize} 
\item the function $tQ_t(-\varphi)$ is a Kantorovich potential from $\mu_t$ to $\mu_0$,
\item  the function $(1-t)Q_{1-t}(-\varphi^c)$ is a Kantorovich potential from $\mu_t$ to $\mu_1$.
\end{itemize}
Furthermore, for every $t\in[0,1]$ it holds
\begin{equation}
\label{eq:intpot}
\begin{split}
Q_t(-\varphi)+Q_{1-t}(-\varphi^c)&\geq 0,\qquad\textrm{ everywhere},\\
Q_t(-\varphi)+Q_{1-t}(-\varphi^c)&=0,\qquad\textrm{on } \supp(\mu_t).
\end{split}
\end{equation}
\end{proposition}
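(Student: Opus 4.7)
The plan is to reduce everything to pointwise identities along a fixed optimal geodesic plan, so first I fix $\pi\in\prob{\geo(X)}$ concentrated on constant-speed geodesics with $(e_t)_*\pi=\mu_t$ for all $t\in[0,1]$; this exists by the $W_2$-geodesic hypothesis, and for $\pi$-a.e.\ $\gamma$ one has $\sfd(\gamma_s,\gamma_t)=|t-s|\sfd(\gamma_0,\gamma_1)$ together with $\varphi(\gamma_0)+\varphi^c(\gamma_1)=\sfd^2(\gamma_0,\gamma_1)/2$. Since $(e_t,e_0)_*\pi$ and $(e_t,e_1)_*\pi$ are optimal plans between the relevant marginals, the two bullets reduce to exhibiting candidate $c$-concave potentials that saturate the duality bound on $\supp(\pi)$.

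For the first bullet set $\tilde\varphi_t(x):=tQ_t(-\varphi)(x)=\inf_y\{\sfd^2(x,y)/2-t\varphi(y)\}$, which is just $(t\varphi)^c$ and hence $c$-concave. The central step is to show $\tilde\varphi_t(\gamma_t)=t^2\sfd^2(\gamma_0,\gamma_1)/2-t\varphi(\gamma_0)$ for $\pi$-a.e.\ $\gamma$: the $\le$ bound is obtained by choosing $y=\gamma_0$, while for $\ge$ one inserts $\varphi(y)\le\sfd^2(y,\gamma_1)/2-\varphi^c(\gamma_1)$ together with $\varphi^c(\gamma_1)=\sfd^2(\gamma_0,\gamma_1)/2-\varphi(\gamma_0)$ into the functional to be minimised, reducing matters to the purely metric inequality
\[
(1-t)\sfd^2(\gamma_t,y)-t(1-t)\sfd^2(y,\gamma_1)+t\sfd^2(\gamma_t,\gamma_1)\ge 0,
\]
which via $\sfd(y,\gamma_1)\le\sfd(\gamma_t,y)+\sfd(\gamma_t,\gamma_1)$ factors as $[(1-t)\sfd(\gamma_t,y)-t\sfd(\gamma_t,\gamma_1)]^2\ge0$. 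Granted this, the $c$-transform is read off as $\tilde\varphi_t^c(\gamma_0)=t\varphi(\gamma_0)$: the $\le$ direction comes from taking $z=\gamma_t$ in its defining infimum, the $\ge$ direction from the trivial bound $\tilde\varphi_t(z)\le\sfd^2(z,\gamma_0)/2-t\varphi(\gamma_0)$. Summing the two pointwise values gives $\tilde\varphi_t(\gamma_t)+\tilde\varphi_t^c(\gamma_0)=\sfd^2(\gamma_t,\gamma_0)/2$ on $\supp(\pi)$, which is exactly what identifies $\tilde\varphi_t$ as a Kantorovich potential from $\mu_t$ to $\mu_0$.

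The second bullet is symmetric: exchange $\varphi\leftrightarrow\varphi^c$, $0\leftrightarrow 1$ and $t\leftrightarrow 1-t$. For \eqref{eq:intpot} I would use the two-variable representation
\[
Q_t(-\varphi)(x)+Q_{1-t}(-\varphi^c)(x)=\inf_{y_1,y_2}\Big\{\frac{\sfd^2(x,y_1)}{2t}+\frac{\sfd^2(x,y_2)}{2(1-t)}-\varphi(y_1)-\varphi^c(y_2)\Big\},
\]
combined with the elementary inequality $\sfd^2(x,y_1)/(2t)+\sfd^2(x,y_2)/(2(1-t))\ge\sfd^2(y_1,y_2)/2$ --- another completed square, $[(1-t)\sfd(x,y_1)-t\sfd(x,y_2)]^2\ge 0$, together with the triangle inequality --- and the $c$-concavity bound $\varphi(y_1)+\varphi^c(y_2)\le\sfd^2(y_1,y_2)/2$. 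This yields nonnegativity globally, while evaluating at $x=\gamma_t$ with $(y_1,y_2)=(\gamma_0,\gamma_1)$ and invoking the calculation of the previous paragraph gives the equality on $\supp(\mu_t)$.

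The main obstacle is the lower bound for $\tilde\varphi_t(\gamma_t)$ in the second paragraph: without any Hilbertian structure, nothing beyond the squared-distance factorisation above seems to force the geodesic information into the problem. Everything else --- the computation of $\tilde\varphi_t^c$, the symmetric second bullet, and both parts of \eqref{eq:intpot} --- is a direct manipulation of $c$-concavity and of the definition of the Hopf--Lax semigroup $Q_t$.
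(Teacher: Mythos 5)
The paper does not actually prove this proposition --- it is quoted with a pointer to Theorem 7.36 of \cite{Villani09} and Theorem 2.18 of \cite{AmbrosioGigli11} --- and your argument is, up to notation, the standard proof given in those references: attainment of the infimum defining $(t\varphi)^c(\gamma_t)$ at $y=\gamma_0$ via the completed square $[(1-t)\sfd(\gamma_t,y)-t\sfd(\gamma_t,\gamma_1)]^2\geq 0$, the identification of the $c$-transform, and the two-variable representation for \eqref{eq:intpot}; all the computations check out. The one step to tighten is the very last claim: your argument gives the equality in \eqref{eq:intpot} at $\gamma_t$ for every $\gamma\in\supp(\pi)$ (the contact condition $\varphi(\gamma_0)+\varphi^c(\gamma_1)=\sfd^2(\gamma_0,\gamma_1)/2$ does hold on all of $\supp((e_0,e_1)_*\pi)$, since the duality defect is upper semicontinuous and nonpositive, so its zero set is closed), but $e_t(\supp(\pi))$ is a priori only dense in $\supp(\mu_t)$, so a short closing remark (e.g.\ compactness of $\supp(\pi)$, or a semicontinuity argument as in the cited references) is needed to pass from this dense set to the whole support.
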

This proposition, coupled with the Lewy-Stampacchia inequality and the Laplacian comparison estimate, allows to produce a sort of regularized Kantorovich potentials from intermediate times along a $W_2$-geodesic in infinitesimally strictly convex $\CD^*(K,N)$ spaces, see Theorem \ref{thm:regkant} and the discussion after it for precise statements.

In general $\CD^*(K,N)$ spaces we don't know whether cut-off functions with bounded Laplacian and regularized Kantorovich potentials can be built Lipschitz. In order to get this further property we need to work on infinitesimally Hilbertian   $\CD^*(K,N)$ spaces, also called  $\RCD^*(K,N)$ spaces (\cite{AmbrosioGigliSavare11-2}, \cite{Gigli12}, \cite{AmbrosioGigliMondinoRajala12}, \cite{Erbar-Kuwada-Sturm13}). This enhanced regularity  can be obtained either as a consequence of the general results established in \cite{Jiang12} and \cite{Kell13} concerning Lipschitz continuity of functions with bounded Laplacian, or, as we will do, from the Lipschitz continuity of harmonic functions on $\RCD^*(K,N)$ spaces obtained in \cite{Jiang13} (see also \cite{Kell13} and \cite{Hua-Kell-Xia13} and references therein) and known techniques in the study of the obstacle problem. The advantage of choosing this second approach is that we will obtain  Lipschitz continuity of the solution of the obstacle problem independently of the Laplacian comparison estimates.

In order to pursue this plan we need to recall some results about non-linear potential theory in metric measure spaces. Key facts are that $\CD^*(K,N)$, $N<\infty$, spaces are {\bf doubling} (\cite{Sturm06II}) and support a {\bf weak local 1-2 Poincar\'e inequality} (\cite{Lott-Villani07}, \cite{Rajala12}) and a number of consequences about the behavior of harmonic functions can be deduced from these informations, see \cite{Bjorn-Bjorn11} for an overview on the subject. We shall recall those results we need without aiming at any generality, but only focussing in the content relevant for our discussion.

We start noticing that for $f\in W^{1,2}(\Omega)$, $\Omega$ being an open subset of an infinitesimally strictly convex $\CD^*(K,N)$ space, to be in $D(\bd,\Omega)$ with $\bd f\restr\Omega\leq 0$ is the same as to have the minimization property
\begin{equation}
\label{eq:permin}
\int_\Omega |Df|^2\,\d\mm\leq\int_\Omega |D(f+g)|^2\,\d\mm,\qquad\forall g\in W^{1,2}_0(\Omega),\ g\geq 0
\end{equation}
see \cite{Gigli12} and \cite{Gigli-Mondino12} for the details. Similarly for non-negative Laplacian and non-positive perturbations. In particular,  we can unambiguously define harmonic functions either as those having 0 Laplacian or as local minimizers of the energy.

We then have the following results:
\begin{theorem}[Basic facts about harmonic functions]\label{harmonic}
Let $(X,\sfd,\mm)$ be a $\CD^*(K,N)$ space, $N<\infty$, and $\Omega\subset X$ a bounded open set. Then the following hold.
\begin{itemize}
\item[i)] \underline{Harnack inequality}.  There exist constants $c,\lambda>1$ depending only on $K,N$ and not on $\Omega$  such that the following holds. Let   $f\in W^{1,2}(\Omega)$ be harmonic on  $\Omega$ and non-negative. Then for every $x\in\Omega$ and $r>0$ such that $B_{\lambda r}(x)\subset \Omega$ we have
\begin{equation}
\label{eq:harnack}
\esssup_{B_r(x)}f\leq c\essinf_{B_{\lambda r}(x)}f.
\end{equation}
In particular, harmonic functions have continuous representatives.
\item[ii)]\underline{Strong maximum principle}. Let   $f\in W^{1,2}(\Omega)$ be harmonic on $\Omega$ and assume that its continuous representative has a maximum in a point $x_0\in \Omega$.  Then it is constant on the connected component of $\Omega$ containing $x_0$.

\item[iii)]\underline{Existence and uniqueness of harmonic functions}. Assume that $\mm(X\setminus\Omega)>0$ and let $f\in W^{1,2}(X)$. Then there exists a unique harmonic function $g\in W^{1,2}(\Omega)$ in $\Omega$ such that $f-g\in W^{1,2}_0(\Omega)$. 

\item[iv)]\underline{Comparison principles}. With the same notation and assumptions of the point above, assume furthermore that $\bd f\restr\Omega\leq 0$. Then $f\geq g$ $\mm$-a.e. on $\Omega$. On the other hand, if $f\geq 0$ $\mm$-a.e. on $\Omega$, then $g\geq 0$ $\mm$-a.e. on $\Omega$.
\end{itemize}
\end{theorem}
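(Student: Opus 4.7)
The four statements draw on three distinct toolkits: (i) is pure nonlinear potential theory on doubling Poincar\'e spaces, (ii) is a classical consequence of (i), (iii) combines the direct method of the calculus of variations with the strong maximum principle of (ii), and (iv) is the standard ``truncation competitor'' argument from obstacle theory. I would address them in this order.

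For (i), the setting is favorable: $\CD^*(K,N)$ with $N<\infty$ is locally doubling (\cite{Sturm06II}) and supports a weak local $(1,2)$-Poincar\'e inequality (\cite{Rajala12}), with constants depending only on $K$ and $N$. I would then invoke the general Moser iteration machinery for energy minimizers on doubling PI spaces, developed in \cite{Bjorn-Bjorn11}, which directly yields the Harnack inequality \eqref{eq:harnack}; iterating it on concentric balls gives the standard oscillation decay and hence a locally H\"older continuous representative for every harmonic function. Then (ii) is the classical consequence: if the continuous representative of $f$ attains its maximum $M$ at $x_0\in\Omega$, the non-negative harmonic function $M-f$ vanishes at $x_0$, so \eqref{eq:harnack} forces it to vanish on an entire ball around $x_0$; hence $\{f=M\}$ is open, closed by continuity, and coincides with the connected component of $\Omega$ containing $x_0$.

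For (iii), existence follows from the direct method: the functional $g\mapsto\tfrac12\int_\Omega |D(f+g)|^2\,\d\mm$ is convex and continuous on $W^{1,2}_0(\Omega)$, coercive via the Poincar\'e inequality for $W^{1,2}_0(\Omega)$ (available because $\Omega$ is bounded and $\mm(X\setminus\Omega)>0$ in a doubling Poincar\'e space), and weakly lower semicontinuous; reflexivity of $W^{1,2}$ on PI spaces then yields a minimizer. Uniqueness is more delicate in this potentially non-Hilbertian setup, since the energy $u\mapsto\int|Du|^2$ need not be strictly convex (one only has $|Du|=|Dv|$ $\mm$-a.e.\ from equality in the convexity inequality, not $u=v$): I would invoke the standard uniqueness for the Dirichlet problem on doubling Poincar\'e spaces (see \cite{Bjorn-Bjorn11}), whose proof combines the strong maximum principle of (ii) with the specific structure of the $2$-energy and the assumption $\mm(X\setminus\Omega)>0$ to rule out non-trivial elements of $W^{1,2}_0(\Omega)$ whose addition to a minimizer preserves the energy. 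This uniqueness step is where I expect the main technical obstacle to lie.

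Finally, (iv) is obtained by testing against the truncations $\max(f,g)$ and $\min(f,g)$. Let $\phi:=(g-f)^+\in W^{1,2}_0(\Omega)$, which is legitimate because $f-g\in W^{1,2}_0(\Omega)$ and the latter is a lattice w.r.t.\ the a.e.\ ordering. From $\bd f\restr\Omega\leq 0$ via \eqref{eq:permin} applied to the non-negative perturbation $+\phi$, one gets $\int_\Omega |Df|^2\,\d\mm\leq \int_\Omega |D\max(f,g)|^2\,\d\mm$, while minimality of $g$ against the admissible perturbation $-\phi$ gives $\int_\Omega |Dg|^2\,\d\mm\leq \int_\Omega |D\min(f,g)|^2\,\d\mm$. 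Exploiting the locality \eqref{eq:localgrad} of the minimal weak upper gradient on the partition $\{g>f\}\cup\{g\leq f\}$, these two inequalities collapse into the energy equality $\int_\Omega |Dg|^2\,\d\mm=\int_\Omega |D\min(f,g)|^2\,\d\mm$, so uniqueness from (iii) forces $g=\min(f,g)$, i.e.\ $g\leq f$ $\mm$-a.e.\ in $\Omega$. The second claim in (iv) is entirely analogous, with competitor $g\lor 0$ in place of $\min(f,g)$, appealing once more to \eqref{eq:localgrad} to compute its energy.
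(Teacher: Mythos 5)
The paper gives no proof of this theorem: it only records that $\CD^*(K,N)$ spaces with $N<\infty$ are doubling and support a weak local 1-2 Poincar\'e inequality and refers to \cite{Bjorn-Bjorn11} for all four statements. Your proposal performs exactly this reduction and then sketches the standard arguments of nonlinear potential theory on doubling PI spaces (iteration for the Harnack inequality, the chaining argument for the strong maximum principle, the direct method together with the Bj\"orn--Bj\"orn uniqueness for the Dirichlet problem, and the truncation-competitor argument using locality of the minimal weak upper gradient for the comparison principles), so it is correct and follows essentially the same route as the paper.
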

All these statements are valid in the broader class of doubling spaces supporting a weak-local 1-2 Poincar\'e inequality, see \cite{Bjorn-Bjorn11} for the proofs and detailed bibliography.

Similarly, in the theorem below we collect the basic properties of minima of the obstacle problem that we shall need later on, see  \cite{Farnana09} for the proof.
\begin{theorem}[Basic properties of minima of the obstacle problem]\label{contrepr}
Let $(X,\sfd,\mm)$ be a $\CD^*(K,N)$ space, $\Omega\subset X$ a bounded open set and $\varphi,\psi\in W^{1,2}(\Omega)$ be with $\varphi\leq \psi$ $\mm$-a.e. and $f\in W^{1,2}(X)$. Put 
\[
\mathcal K(\varphi,\psi,f):=\big\{u\in W^{1,2}(\Omega)\ :\ \varphi\leq u\leq\psi\ \mm\text{-a.e. and }u-f\in W^{1,2}_0(\Omega)\big\},
\]
where we wrote for brevity $u-f$ to intend the function defined as $u-f$ in $\Omega$ and as 0 on $X\setminus \Omega$.

Assume that $\mathcal K(\varphi,\psi,f)$ is non-empty.  Then a minimizer  $\bar u$ of $E$ on $[\varphi,\psi]$ exists. Moreover, if $\varphi$ and $\psi$ have continuous representatives then $\bar u$ has a continuous representative as well and if $\mm(X\setminus\Omega)>0$ the minimum is unique.
\end{theorem}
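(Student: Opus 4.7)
The plan is to handle the three assertions—existence, continuity of a representative, and uniqueness—in turn, relying on the doubling and weak $(1,2)$-Poincar\'e structure of $\CD^*(K,N)$ spaces with $N<\infty$ and on the basic harmonic regularity collected in Theorem~\ref{harmonic}.

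For \emph{existence}, I would run the direct method. Any minimizing sequence $(u_n)\subset \mathcal K(\varphi,\psi,f)$ is bounded in $L^2(\Omega)$ by the obstacles $\varphi,\psi\in L^2(\Omega)$ and has $\||Du_n|\|_{L^2}$ bounded by the energy control, hence is bounded in $W^{1,2}(\Omega)$. The Rellich--Kondrachov-type compactness, valid in doubling spaces with a weak $(1,2)$-Poincar\'e inequality, yields an $L^2(\Omega)$- and $\mm$-a.e.\ convergent subsequence $u_{n_k}\to \bar u$. The obstacle constraints $\varphi\leq \bar u\leq \psi$ pass to the limit $\mm$-a.e., while the affine constraint $\bar u-f\in W^{1,2}_0(\Omega)$ passes to the limit by closedness of $W^{1,2}_0(\Omega)$ under weak $W^{1,2}$-convergence (extracted from the $L^2$ bound on the gradients). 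The $L^2$-lower semicontinuity of $g\mapsto \int|Dg|^2\,\d\mm$ then gives $E(\bar u)\leq \liminf_k E(u_{n_k})$, so $\bar u$ is a minimizer in $\mathcal K(\varphi,\psi,f)$.

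For \emph{continuity} of $\bar u$ when $\varphi$ and $\psi$ are continuous, the key observation is that on any open ball $B\subset\Omega$ on which a suitable representative of $\bar u$ lies strictly between $\varphi$ and $\psi$, sufficiently small $W^{1,2}_0(B)$-perturbations preserve the obstacle constraints. Hence $\bar u$ is a local minimizer of the Dirichlet energy on $B$, i.e.\ harmonic there by \eqref{eq:permin}, and therefore by Theorem~\ref{harmonic}(i) admits a continuous representative on the open non-contact set $\{\varphi<\bar u<\psi\}$. On the contact sets $\bar u$ coincides $\mm$-a.e.\ with $\varphi$ or $\psi$, hence is continuous there as well. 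The main technical obstacle is gluing these local continuity statements across the free boundaries; this is handled by a Perron-type argument comparing the lower and upper semicontinuous envelopes $\bar u_*$ and $\bar u^*$, using the strong maximum principle (Theorem~\ref{harmonic}(ii)) inside the non-contact set and the continuity of the obstacles on the contact set to show that $\bar u_*\equiv \bar u^*$, so $\bar u$ admits a continuous representative globally on $\Omega$.

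For \emph{uniqueness} when $\mm(X\setminus\Omega)>0$, let $u_1,u_2\in\mathcal K(\varphi,\psi,f)$ be two minimizers. By convexity of $\mathcal K$ and of $E$, the midpoint $(u_1+u_2)/2$ is also a minimizer; combining the subadditivity and positive homogeneity of the minimal weak upper gradient with the strict convexity of $t\mapsto t^2$ forces $|Du_1|=|Du_2|$ $\mm$-a.e., and pushing the equality through the locality \eqref{eq:localgrad} of the weak upper gradient yields $|D(u_1-u_2)|=0$ $\mm$-a.e. Connectedness of $X$ (a consequence of $\CD^*(K,N)$ for $N<\infty$) then makes $u_1-u_2$ constant $\mm$-a.e.; since $u_1-u_2\in W^{1,2}_0(\Omega)$ vanishes on $X\setminus\Omega$ and $\mm(X\setminus\Omega)>0$, this constant must be $0$. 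The careful execution of this strict-convexity step in absence of an infinitesimally Hilbertian assumption is the most delicate part of the argument, and is the one for which I would defer to the detailed treatment in \cite{Farnana09}.
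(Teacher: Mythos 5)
You should know first that the paper does not actually prove Theorem~\ref{contrepr}: it is presented as a collection of known facts about the double obstacle problem on doubling spaces with a weak $(1,2)$-Poincar\'e inequality, and the proof is deferred wholesale to \cite{Farnana09}. So there is no in-paper argument to compare against, and your instinct to lean on that reference for the delicate steps is consistent with the paper's intent. Your existence argument (direct method, compactness or Mazur's lemma to pass from a bounded minimizing sequence to a strong $L^2$ limit, $L^2$-lower semicontinuity of the energy, and weak closedness of the convex constraint set, including the affine condition $u-f\in W^{1,2}_0(\Omega)$) is sound and is essentially the standard one.

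As a standalone proof, however, the sketch has two genuine gaps, exactly at the points you flag. For \emph{continuity}: the set $\{\varphi<\bar u<\psi\}$ is only defined up to choice of representative and there is no reason for it to be open before one knows that $\bar u$ admits a (quasi)continuous representative, so ``$\bar u$ is harmonic on the open non-contact set'' is circular as a starting point; moreover, agreeing $\mm$-a.e.\ with a continuous obstacle on the contact set does not give continuity of $\bar u$ \emph{at} contact points, since every neighbourhood of such a point meets the non-contact region --- this free-boundary matching is precisely the content of the theorem, and invoking ``a Perron-type argument on the semicontinuous envelopes'' names the difficulty rather than resolving it. The route in \cite{Farnana09} (following Kinnunen--Martio and Bj\"orn--Bj\"orn) first produces an lsc-regularized superminimizer representative and then squeezes it between the obstacles and harmonic replacements on small balls, using the continuity of $\varphi,\psi$ quantitatively. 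For \emph{uniqueness}: the passage from $|Du_1|=|Du_2|=|D(\tfrac{u_1+u_2}{2})|$ $\mm$-a.e.\ to $|D(u_1-u_2)|=0$ $\mm$-a.e.\ is not a formal consequence of subadditivity, positive homogeneity and locality. Theorem~\ref{contrepr} is stated for general $\CD^*(K,N)$ spaces --- no infinitesimal Hilbertianity, not even infinitesimal strict convexity --- so $|D\cdot|$ is not the modulus of any linear differential, ``equality in the triangle inequality'' yields no cancellation, and locality \eqref{eq:localgrad} controls $|D(u_1-u_2)|$ only on $\{u_1=u_2\}$, where it is trivially zero, saying nothing on $\{u_1\neq u_2\}$. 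Closing this step requires a finer argument (e.g.\ exploiting that $u_1\land u_2$ and $u_1\lor u_2$ are again minimizers), which is one more reason the paper simply cites \cite{Farnana09}.
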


We conclude this introduction recalling the local Lipschitz regularity of harmonic functions on $\RCD^*(K,N)$ spaces. Unlike Theorems \ref{harmonic} and \ref{contrepr} above, here the lower Ricci curvature bound plays a crucial role:

\begin{theorem}[Lipschitz continuity of harmonic functions on $\RCD^*(K,N)$ spaces]\label{lipharm} There exists a constant $\mathcal C=\mathcal C(K,N)$ such that the following holds.
Let $(X,\sfd,\mm)$ be a $\RCD^*(K,N)$ space, $\Omega\subset X$ an open set and $ \bar u\in W^{1,2}(\Omega)$ harmonic.

Then for every $x\in \Omega$ and every $r\in(0,1)$ such that $B_{2r}(x)\subset \Omega$ we have
\begin{equation}
\label{eq:stimalip}
{\rm Lip}(\bar u\restr{B_r(x)})\leq \frac{\mathcal C}r\frac{1}{\mm(B_{2r}(x))}\int_{B_{2r}(x)}|\bar u|\,\d\mm,
\end{equation}
having identified $\bar u$ with its continuous representative.
\end{theorem}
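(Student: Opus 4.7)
The plan is to invoke the Lipschitz regularity result for harmonic functions on $\RCD^*(K,N)$ spaces established in \cite{Jiang13}, which yields, for $\bar u$ harmonic on an open set containing $B_{2r}(x)$, the pointwise gradient bound
\[
\esssup_{B_r(x)} |D\bar u| \leq \frac{\mathcal C_1(K,N)}{r}\esssup_{B_{2r}(x)}|\bar u|.
\]
This rests on the Bochner inequality available on $\RCD^*(K,N)$ spaces together with Bakry-Emery type heat kernel gradient estimates; the restriction $r \in (0,1)$ enters to absorb the $K$-dependent lower order terms into a constant depending only on $K, N$.

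Next, to replace the essential supremum of $|\bar u|$ on the right-hand side by an $L^1$ average, I would apply a mean value / local boundedness inequality for harmonic functions. Since $\CD^*(K,N)$ spaces are doubling and support a weak local $(1,2)$-Poincar\'e inequality, the standard Moser iteration based on the Caccioppoli estimate and the Poincar\'e inequality yields
\[
\esssup_{B_{3r/2}(x)}|\bar u| \leq \frac{\mathcal C_2(K,N)}{\mm(B_{2r}(x))}\int_{B_{2r}(x)}|\bar u|\,d\mm,
\]
see for instance \cite{Bjorn-Bjorn11}. Applying Jiang's gradient bound on a slightly smaller ball and chaining the two inequalities, together with the doubling property to absorb the change of radii into the constant, produces the desired estimate with $\mathcal C = \mathcal C_1 \cdot \mathcal C_2$ up to a multiplicative factor depending only on doubling.

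To conclude, I would translate the $L^\infty$ control on $|D\bar u|$ into a genuine pointwise Lipschitz bound on the continuous representative of $\bar u$. This uses that $(X,\sfd)$ is a length space (which is built into the $\CD^*$ condition) together with the identification, valid on length metric measure spaces, $\mathrm{Lip}(\bar u\restr{B_r(x)}) = \esssup_{B_r(x)}|D\bar u|$, once $\bar u$ has a continuous representative on $B_r(x)$, the latter being guaranteed by Theorem \ref{harmonic}.

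The main obstacle is really the content of \cite{Jiang13} itself, namely establishing the first displayed pointwise gradient estimate in the abstract $\RCD^*(K,N)$ setting in the absence of any smooth structure; this is where the lower Ricci bound is crucial, and where the proof substantially departs from the doubling/Poincar\'e arguments that were sufficient for Theorems \ref{harmonic} and \ref{contrepr}. The remaining two steps are comparatively soft consequences of the metric measure machinery available on $\CD^*(K,N)$ spaces.
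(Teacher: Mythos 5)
The paper offers no proof of this statement at all: it simply attributes it to \cite{Jiang13}, used together with the calculus tools of \cite{Gigli12}, which is exactly the source you rely on for the key gradient estimate. Your proposal is therefore consistent with the paper's approach, and the extra steps you supply (local boundedness via Moser iteration on doubling/Poincar\'e spaces, and the passage from $\esssup_{B_r(x)}|D\bar u|$ to a genuine Lipschitz bound --- which on a general length m.m.\ space is only an inequality and really uses the Sobolev-to-Lipschitz property available on $\RCD^*(K,N)$ spaces) are the standard way to put Jiang's estimate into the stated form.
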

The proof is given in \cite{Jiang13} provided the Dirichlet energy is the natural one in the $\RCD^*(K,N)$ and one uses the calculus tools developed in \cite{Gigli12} (see also \cite{Kell13}, \cite{Hua-Kell-Xia13} and references therein for further details on the topic).

\subsubsection{Lewy-Stampacchia inequality on metric measure spaces}

Given a metric measure space $(X,\sfd,\mm)$ and $\Omega\subset X$ open, the energy functional $E:W^{1,2}_0(\Omega)\to \R$ is clearly convex, continuous and, thanks to the locality property \eqref{eq:localgrad}, submodular. Therefore a direct application of the general Theorem \ref{LS} yields the following regularity result for solutions of the double obstacle problem. Both for simplicity and in view of the forthcoming applications, we state it on infinitesimally strictly convex spaces so that the Laplacian is uniquely defined, but an analogous result holds on every m.m.\ space:
\begin{theorem}[Lewy-Stampacchia inequality on metric measure spaces]
\label{LSmetric}
Let $(X, \sfd, \mm)$ be an infinitesimally strictly convex metric measure space, $\Omega\subset X$ open and   $\varphi, \psi\in W^{1,2}_0(\Omega)\cap D(\bd, \Omega)$ with $\varphi\leq \psi$ $\mm$-a.e..

Then for every minimizer $\bar u$ of   $E$ over $[\varphi, \psi]\subset W^{1,2}_0(\Omega)$ we have $\bar u\in D(\bd,\Omega)$ with
\[
\bd\varphi\restr\Omega\land 0\lleq \bd  \bar u\restr\Omega\lleq  \bd  \psi\restr\Omega\lor 0.
\]
\end{theorem}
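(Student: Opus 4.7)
The plan is to recognize this as essentially a direct application of the abstract Lewy--Stampacchia inequality (Theorem \ref{LS}) to the topological vector lattice $B := W^{1,2}_0(\Omega)$, equipped with the $\mm$-a.e.\ ordering, and the functional $E(u) := \frac{1}{2}\int_\Omega |Du|^2 \d\mm$. The actual work consists in checking that the abstract hypotheses are met in the present concrete setting and then identifying the output of Theorem \ref{LS} with the measure-valued Laplacians.

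First I would verify the three structural hypotheses on $E$. Convexity is immediate and continuity of $E$ on $B$ follows from the very definition of the $W^{1,2}_0$-norm. The submodularity is the only nontrivial structural input and is a direct consequence of the locality property \eqref{eq:localgrad}: for $u,v \in W^{1,2}_0(\Omega)$, on the set $\{u \leq v\}$ we have $u \land v = u$ and $u \lor v = v$ $\mm$-a.e., whence locality of the minimal weak upper gradient yields $|D(u\land v)|^2 + |D(u \lor v)|^2 = |Du|^2 + |Dv|^2$ $\mm$-a.e.; the symmetric identity holds on $\{u > v\}$, so integrating gives in fact equality in \eqref{order}. Since $E$ is finite and continuous on all of $B$, ${\rm dom}\,E \cap [\varphi,\psi] = [\varphi,\psi] \neq \emptyset$ contains a point (e.g.\ $\varphi$ itself) where $E$ is continuous, so the continuity alternative in the hypothesis of Theorem \ref{LS} is trivially satisfied.

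Next I would recall that infinitesimal strict convexity makes $E$ differentiable on $B$, so that $\partial E(u)$ is a singleton for every $u \in B$. Moreover, the obvious translation identity $\partial E(u) = \partial E_{u,\Omega}(0)$ shows that the condition $u \in D(\bd,\Omega)$ is exactly the condition that this singleton lies in $B^*_\lleq = W^{-1,2}_\lleq(\Omega)$, with $-\bd u\restr\Omega$ being the corresponding element. Thus the hypothesis $\varphi,\psi \in D(\bd,\Omega)$ grants that $w_1^* := -\bd \varphi\restr\Omega \in \partial E(\varphi) \cap B^*_\lleq$ and $w_2^* := -\bd \psi\restr\Omega \in \partial E(\psi) \cap B^*_\lleq$, so Theorem \ref{LS} applies and produces $x_1^*, x_2^* \in \partial E(\bar u) \cap B^*_\lleq$ with $x_1^* \lleq w_1^* \lor 0$ and $w_2^* \land 0 \lleq x_2^*$. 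By the uniqueness of the subdifferential, $x_1^* = x_2^* =: x^*$ is a single functional lying in $B^*_\lleq$; this already shows $\bar u \in D(\bd,\Omega)$, with $x^* = -\bd \bar u \restr \Omega$.

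Finally, substituting these identifications into the two inequalities from Theorem \ref{LS} and multiplying by $-1$ (which reverses $\lleq$ and swaps $\lor$ with $\land$) yields directly
\[
\bd\varphi\restr\Omega \land 0 \lleq \bd \bar u \restr\Omega \lleq \bd\psi\restr\Omega \lor 0.
\]
The main point of care will be the identification between the abstract subdifferentials in $W^{-1,2}(\Omega)$ and the measure-valued Laplacians, for which both the uniqueness granted by infinitesimal strict convexity and the faithful representation of $W^{-1,2}_\lleq(\Omega)$ by Radon measures (established in the preliminaries, using the density variant \eqref{eq:densevariant}) play an essential role; once these are in place the argument is a mechanical translation of Theorem \ref{LS}.
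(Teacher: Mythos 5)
Your proposal is correct and follows essentially the same route as the paper: the paper likewise observes that $E$ on $W^{1,2}_0(\Omega)$ is convex, continuous and, by the locality property \eqref{eq:localgrad}, submodular (in fact with equality in \eqref{order}), and then invokes Theorem \ref{LS}, using infinitesimal strict convexity to identify the unique element of $\partial E(\bar u)$ with $-\bd\bar u\restr\Omega$. Your sign bookkeeping and the verification of the continuity alternative in the hypotheses of Theorem \ref{LS} are both accurate.
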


\subsubsection{Lipschitz regularity for minima of the obstacle problem on $\RCD^*(K,N)$  spaces}\label{se:lip}
Here we shall prove that on $\RCD^*(K,N)$ spaces, the minimum of the double obstacle problem for given Lipschitz obstacles is Lipschitz itself. This result is independent on the Lewy-Stampacchia inequality: its the proof quite standard once Lipschitz continuity of harmonic functions is known.
\begin{proposition}
\label{lipschitzianity} Let $(X, \sfd, \mm)$ be a $\RCD^*(K, N)$ space, $\Omega\subset X$ open and  bounded and  $\varphi,\psi\in W^{1,2}_0(\Omega)\cap \Lip(\overline \Omega)$. Furthermore, let $\bar u\in W^{1,2}_0(\Omega)$ be a minimizer of $E$ on $[\varphi,\psi]$. 

Then $\bar u$ has a Lipschitz representative, still denoted by $\bar u$, and the bound
\beq
\label{lipu}
\Lip(\bar u)\leq 2\mathcal C\lambda(1+\lambda)(1+c)\Lip(\varphi)\lor \Lip(\psi),
\eeq
holds, where $c,\lambda$ are the constants in the Harnack inequality \eqref{eq:harnack} and $\mathcal C>1$ the one appearing in the Lipschitz estimate \eqref{eq:stimalip}.
\end{proposition}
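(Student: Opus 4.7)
My plan is to exploit the local harmonicity of $\bar u$ off the contact set, together with the Lipschitz estimate for harmonic functions (Theorem \ref{lipharm}) and the Harnack inequality (Theorem \ref{harmonic}(i)). First I would fix the continuous representative of $\bar u$ granted by Theorem \ref{contrepr} and set $L_0:=\Lip(\varphi)\lor\Lip(\psi)$. A direct consequence of the minimality of $\bar u$ in $[\varphi,\psi]$ is that $\bar u$ is superharmonic on the open set $\{\bar u>\varphi\}$ and subharmonic on $\{\bar u<\psi\}$, hence harmonic on $U:=\{\varphi<\bar u<\psi\}$, while on the coincidence sets $\bar u$ agrees pointwise with one of the Lipschitz obstacles.

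The heart of the proof is a local oscillation bound of the form
\[
\mathrm{osc}\bigl(\bar u;B_{2r}(x_0)\bigr)\leq 2\lambda(1+\lambda)(1+c)L_0\,r
\]
whenever $B_{2\lambda(1+\lambda)r}(x_0)\subset\Omega$. I would argue by cases according to the contact structure of $\bar u$ on the slightly enlarged ball $B_{2\lambda(1+\lambda)r}(x_0)$. If $\bar u$ achieves contact with both obstacles there, Lipschitzianity of $\varphi$ and $\psi$ combined with the pointwise sandwich $\varphi\leq\bar u\leq\psi$ gives the estimate via two triangle inequalities, without even invoking Harnack. If contact occurs only with, say, $\psi$ at some $y_0$, then on one hand the Lipschitz bound on $\psi$ yields $\sup_{B_{2r}(x_0)}\bar u\leq\psi(y_0)+2(1+\lambda)L_0 r$; on the other hand, on a ball where $\bar u<\psi$ strictly (obtained by shrinking if necessary using continuity of $\bar u-\psi$) one applies Harnack's inequality to the non-negative superharmonic function $\psi(y_0)+2(1+\lambda)L_0 r-\bar u$ to obtain the matching lower bound $\inf_{B_{2r}(x_0)}\bar u\geq\psi(y_0)-(1+c)\cdot 2(1+\lambda)L_0 r$, where the $(1+c)$ factor is precisely the Harnack constant in Theorem \ref{harmonic}(i). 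If $\bar u$ has no contact in the enlarged ball, then $\bar u$ itself is harmonic there and a symmetric Harnack argument applied to both $\bar u-\inf_{B_{2\lambda r}}\bar u$ and $\sup_{B_{2\lambda r}}\bar u-\bar u$, combined with the fact that these are pinned to obstacle values at the first contact point outside the ball, closes the estimate with the same constants.

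Once the oscillation bound is in hand, Theorem \ref{lipharm} applied on $B_r(x_0)$ (a sub-ball of $U$ of the same scale, using that $\Lip$ is translation invariant so $|\bar u|$ may be replaced by $|\bar u-c|$ for a well-chosen constant and then bounded by the oscillation) yields
\[
\Lip\bigl(\bar u\restr{B_{r/2}(x_0)}\bigr)\leq\frac{\mathcal C}{r}\,\mathrm{osc}\bigl(\bar u;B_{r}(x_0)\bigr)\leq 2\mathcal C\lambda(1+\lambda)(1+c)L_0,
\]
so that $\bar u$ admits a local Lipschitz representative with the prescribed constant. A standard chaining argument over overlapping balls of a fixed small radius then upgrades this to the global bound on the continuous representative stated in \eqref{lipu}.

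The main obstacle I foresee is the single-contact case of the oscillation estimate: verifying that on the relevant comparison ball $\bar u$ is strictly separated from the opposite obstacle (so that Harnack really applies to a one-sided super/subharmonic function), and calibrating the radii ($r$, $\lambda r$, $\lambda(1+\lambda)r$) so that Harnack's inequality, whose hypothesis requires the test ball to be $\lambda$-times smaller than the ambient harmonic ball, produces exactly the factor $(1+c)$ rather than a worse combination of $c$ and $\lambda$. A secondary technical point is the passage from pointwise Lipschitz bounds on small balls to the global estimate in $\Omega$, which is standard but must be done while keeping track of the continuous representative.
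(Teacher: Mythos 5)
Your overall strategy (decompose according to the contact sets, use the Lipschitz bound of the obstacles for the one-sided estimate, Harnack for the other side, and the Lipschitz estimate for harmonic functions off the contact set) is the same skeleton as the paper's proof, but the step you yourself flag as the main obstacle --- the single-contact case --- is genuinely broken as sketched, and it is the crux of the whole argument. You propose to apply the Harnack inequality to the non-negative function $w:=\psi(y_0)+2(1+\lambda)L_0r-\bar u$ ``on a ball where $\bar u<\psi$ strictly''. But Theorem \ref{harmonic}(i) requires $w$ to be harmonic on a ball containing the $\lambda$-enlargement of the ball over which you take the supremum, and the only point where you know $w$ is small (namely the contact point $y_0$, where $w(y_0)=2(1+\lambda)L_0 r$) lies \emph{outside} any such ball, since $\bar u(y_0)=\psi(y_0)$. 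On the shrunken ball where $\bar u<\psi$ strictly you have no a priori control on $\inf w$, so Harnack gives you nothing. This circularity cannot be fixed by calibrating radii. The paper's way out is the decomposition $v=v_1+v_2$ with $v_1$ the harmonic replacement of $v$ on the full ball $B_{\lambda\rho}(x_0)$ centered \emph{at the contact point}: Harnack legitimately applies to $v_1$ (harmonic on the whole ball), with $\inf v_1\leq v_1(x_0)\leq v(x_0)$ controlled by the comparison principle and the fact that $x_0$ is a contact point; the remainder $v_2$ is handled by the maximum principle, which pushes its maximum onto $\supp(\bd \bar u)\subseteq C_\varphi$, where $\bar u$ coincides with the Lipschitz obstacle. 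This harmonic-replacement-plus-potential-part device is the missing idea in your sketch, and without it the one-sided bound does not close.

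Two secondary issues. First, your no-contact case is also only gestured at: if $B_{2\lambda(1+\lambda)r}(x_0)$ meets no contact set, the nearest contact point may be at distance much larger than $r$, and ``pinned to obstacle values at the first contact point outside the ball'' does not by itself yield an oscillation of order $r$; the correct move (as in the paper) is to work at the natural scale $r=\sfd(x_0,C_\varphi\cup C_\psi)$, so that a contact point sits on $\partial B_r(x_0)$ and the estimate already proved at contact points covers $B_r(x_0)$. Second, your oscillation bound is only claimed for balls with $B_{2\lambda(1+\lambda)r}(x_0)\subset\Omega$, so the fixed-radius chaining argument leaves the behavior near $\partial\Omega$ uncontrolled; the paper avoids this by extending $\varphi,\psi,\bar u$ by zero to all of $X$, so that $X\setminus\Omega$ is absorbed into the two-sided contact set $C$ and no boundary case arises.
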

\begin{proof} Observe that $\varphi$, $\psi$ and $\bar u$ are functions in $W^{1,2}(X)$ vanishing $\mm$-a.e.\ in $X\setminus\Omega$. Applying Theorem \ref{contrepr} on a bounded neighborhood of $\overline\Omega$ with $f\equiv 0$ we deduce that on such neighborhood, and thus on the whole $X$, $\bar u $ has a continuous representative. Denoting still by $\bar u$ this continuous representative, our aim becomes to prove that $\bar u:X\to\R$ is Lipschitz. Recalling that $X$ is geodesic, to conclude it is sufficient to prove that the bound \eqref{lipu} holds for the local Lipschitz constant, i.e. that
\beq
\label{lipu2}
{\rm lip} \, \bar u(x)\leq 2\mathcal C\lambda(1+\lambda)(1+c)\Lip(\varphi)\lor \Lip(\psi),\qquad\forall x\in X,
\eeq
where ${\rm lip} \, \bar u(x):=\lims_{y\to x}\frac{|u(y)-u(x)|}{\sfd(y,x)}$.
Put $C_\varphi:=\{\bar u =\varphi\}$, $C_\psi:=\{\bar u=\psi\}$, $C:=C_\varphi\cap C_\psi$ and define the function $\delta: C_\varphi\cup C_\psi\to(0,+\infty]$ as
\[
\delta(x)=\left\{
\begin{array}{ll}
\frac{\sfd(x,C)}\lambda,&\qquad\text{ if }x\notin C,\\
+\infty,&\qquad\text{ if }x\in C,
\end{array}
\right.
\]
and the constant $L:=\lambda (1+c)\Lip(\varphi)\lor \Lip(\psi)$. We claim that
\beq
\label{claim2}
x_0\in C_\varphi\cup C_\psi,\quad x\in B_{\delta(x_0)}(x_0) \qquad \qquad \Rightarrow \qquad\qquad |\bar u(x)-\bar u(x_0)|\leq L\sfd (x, x_0),
\eeq
which in particular yields \eqref{lipu2} for $x_0\in C_\varphi\cup C_\psi$. This is obvious for $x_0\in C$, thus assume  $x_0\in C_\varphi\setminus C$. By definition of $\delta(x_0)$ and the equivalence stated in inequality \eqref{eq:permin} and the discussion preceding it, we deduce $\bar u\in D(\bd,B_{\lambda\delta(x_0)}(x_0))$ with $\bd \bar u\restr{B_{\lambda\delta(x_0)}}\leq 0$. Since $\bar u\geq \varphi$, it holds 
\[
\bar u(x)\geq \varphi(x)\geq \varphi(x_0)- \Lip(\varphi ) \sfd(x, x_0)=\bar u(x_0)- \Lip(\varphi )\sfd(x, x_0)\qquad \forall x\in X,
\] 
hence it suffices to prove
\beq
\label{claim3}
u(x)\leq u(x_0)+L\sfd(x, x_0), \qquad \forall x\in B_{\delta(x_0)}(x_0).
\eeq
Fix $x\in B_{\delta(x_0)}(x_0)$, put $\rho:=\sfd(x,x_0)\leq \delta(x_0)$,  $v:=u-u(x_0)+\lambda \Lip(\varphi )\rho$ and notice that $v\in D(\bd,B_{\lambda\rho}(x_0))$ with $v\restr{B_{\lambda\rho}(x_0)}\geq0$ and $\bd v\restr{B_{\lambda\rho}(x_0)}\leq0$. Let $v_1$ be the harmonic function on $B_{\lambda\rho}(x_0)$ with the same boundary data as $v$ (point $(ii)$ of Theorem \ref{harmonic}) and put $v_2:=v-v_1$. By point $(iii)$ of Theorem \ref{harmonic} we see that $v\geq v_1\geq 0$ on $B_{\lambda\rho}(x_0)$. Taking into account the Harnack inequality \eqref{eq:harnack} we deduce
\[
v_1(x)\leq\sup_{B_{\delta(x_0)}(x_0)}v_1\leq c v_1(x_0)\leq cv(x_0)= c\lambda \Lip(\varphi )\rho.
\]
Moreover $v\geq v_2\geq 0$ and $v_2\in W^{1,2}_0(B_{\lambda\rho}(x_0))\cap D(\bd,B_{\lambda\rho}(x_0))$ with $\bd v_2\restr{B_{\lambda\rho}(x_0)}\leq0$, therefore by the maximum principle in point $(ii)$ of Theorem \ref{harmonic}, it attains its maximum at some $\bar x\in \supp(\bd v_2\restr{B_{\lambda\rho}(x_0)})$ (because $v_2$ is harmonic on $B_{\lambda\rho}(x_0)\setminus\supp(\bd v_2\restr{B_{\lambda\rho}(x_0)})$). Since $\bd v_2\restr{B_{\lambda\rho}(x_0)}=\bd v\restr{B_{\lambda\rho}(x_0)}=\bd \bar u\restr{B_{\lambda\rho}(x_0)}$ and clearly ${\rm supp}(\bd u\restr{B_{\lambda\rho}(x_0)})\subseteq C_\varphi\cap \bar B_{\lambda\rho}(x_0)$, it holds
\[
\begin{split}
v_2(x)\leq \sup_{B_{\lambda\rho}(x_0)}v_2= v_2(\bar x)\leq v(\bar x)&=u(\bar x)-u(x_0)+\lambda  \Lip(\varphi )\rho\\
&=\varphi(\bar x)-\varphi(x_0)+\lambda  \Lip(\varphi )\rho\leq 2\lambda \Lip(\varphi )\rho.
\end{split}
\]
The last two inequalities yield $v(x)\leq (c+2)\lambda \Lip(\varphi )\rho$, i.e. $u(x)\leq u(x_0)+ (1+c)\lambda \Lip(\varphi )\rho$, which proves  \eqref{claim3}, and hence \eqref{claim2}, for $x\in C_\varphi\setminus C$. The proof for $x_0\in C_\psi\setminus C$ is entirely analogous. 

It remains to prove \eqref{lipu2} for $x_0\in  U:=X\setminus(C_\varphi\cup C_\psi)$ and to this aim we shall use the bound \eqref{claim2} just proved and the Lipschitz estimate \eqref{eq:stimalip}. Fix $x_0\in U$, let $r:=\sfd(x_0,C_\varphi\cup C_\psi)>0$ and find $x_1\in C_\varphi\cup C_\psi$ such that $\sfd(x_0,x_1)=r$. Two cases may occur: either $2r\leq \delta(x_1)$ or $2r>\delta(x_1)$. 

In the first case, from \eqref{claim2}  and $B_r(x_0)\subset B_{2r}(x_1)$ we deduce that 
\[
 |u(x)-u(x_1)|\leq L\sfd(x,x_1)\leq 2Lr,\qquad \forall x\in B_r(x_0),
 \]
 and hence \eqref{eq:stimalip} applied to the harmonic function $u-u(x_1)$ yields \eqref{lipu2}. 
 
 In the second case find $x_2\in C$ such that $\sfd(x_1,C)=\sfd(x_1,x_2)$, recall the definition of $\delta(x_1)$ to notice that $B_r(x_0)\subset B_{2r(1+\lambda)}(x_2)$, so that
 \[
 |u(x)-u(x_2)|\leq\sfd(x,x_2) \Lip(\varphi)\lor \Lip(\psi)\leq 2r(1+\lambda) \Lip(\varphi)\lor \Lip(\psi),\qquad \forall x\in B_r(x_0),
 \]
 and hence \eqref{eq:stimalip} applied to the harmonic function $u-u(x_2)$ yields \eqref{lipu2}. 
\end{proof} 
\subsubsection{Two constructions on $\CD^*(K,N)$ spaces}

We now turn to the two announced constructions on $\CD^*(K,N)$ spaces:  cut-off functions and regularization of Kantorovich potentials along a geodesic.
\begin{theorem}[Cut-off functions]\label{thm:cutoff}
Let $(X,\sfd,\mm)$ be an infinitesimally strictly convex $\CD^*(K,N)$ space, and $C\subset \Omega\subset X$ with $C$ compact and $\Omega$ open. Then there exists a continuous function $\omega\in W^{1,2}(X,\sfd,\mm)$ identically 1 on $C$, identically 0 on $X\setminus \Omega$ such that $\omega\in D(\bd,X)$ with $\bd\omega\ll\mm$ with bounded density.

If $(X,\sfd,\mm)$ is also infinitesimally Hilbertian (i.e.\ a $\RCD^*(K,N)$ space), then $\omega$ can be chosen to be Lipschitz
\end{theorem}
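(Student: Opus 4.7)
The plan is to realize $\omega$ as the continuous representative of the minimizer of a carefully designed double obstacle problem, combining Theorem~\ref{LSmetric} with the Laplacian comparison of Theorem~\ref{laplcomp}. Since $\CD^*(K,N)$ with $N<\infty$ forces $(X,\sfd)$ to be proper, the compactness of $C$ lets me pick bounded open sets $C\subset\Omega_1\subset\overline{\Omega_1}\subset\Omega_2\subset\overline{\Omega_2}\subset\Omega$ and set $\delta:=\sfd(C,X\setminus\Omega_1)>0$.

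For $r,r'>0$ with $r\sqrt{2}+r'\le\delta$ (one may take $r=r'=\delta/4$), the obstacles I would choose are
\[
\psi(x):=\min\!\left(1,\ \frac{\sfd^2(x,X\setminus\Omega_1)}{2r^2}\right), \qquad \varphi(x):=\left(1-\frac{\sfd^2(x,C)}{r'^2}\right)\lor 0.
\]
Both are Lipschitz, compactly supported in $\overline{\Omega_1}$, belong to $W^{1,2}_0(\Omega_2)$, and satisfy $0\le\varphi\le\psi\le 1$ with $\varphi=\psi=1$ on $C$. The key structural observation is that $r^2\psi=\min(r^2,\sfd^2(\cdot,X\setminus\Omega_1)/2)$ and $\tfrac{r'^2}{2}(-\varphi)=\min(0,\sfd^2(\cdot,C)/2-r'^2/2)$ are both $c$-concave: each is the minimum of a constant and a translate of $\sfd^2(\cdot,A)/2$ (itself $c$-concave, being the $c$-transform of the $0/-\infty$-characteristic function of $A$), and the class of $c$-concave functions is closed under taking minima and adding constants. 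Applying Theorem~\ref{laplcomp} to these two $c$-concave Lipschitz functions, then rescaling, produces constants $C_\pm>0$ with $\bd\psi\lleq C_+\mm$ and $\bd\varphi\ggeq -C_-\mm$ on all of $X$.

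Next I would invoke Theorem~\ref{LSmetric} in the bounded open set $\Omega_2$ with these obstacles, obtaining a minimizer $\bar u\in W^{1,2}_0(\Omega_2)\cap D(\bd,\Omega_2)$ satisfying $\bd\varphi\restr{\Omega_2}\land 0\lleq\bd\bar u\restr{\Omega_2}\lleq\bd\psi\restr{\Omega_2}\lor 0$, so that $\bd\bar u\restr{\Omega_2}\ll\mm$ has density bounded by $\max(C_+,C_-)$. Theorem~\ref{contrepr} supplies a continuous representative $\omega$, and the pointwise sandwich $\varphi\le\omega\le\psi$ forces $\omega\equiv 1$ on $C$ and $\omega\equiv 0$ on the open set $\Omega_2\setminus\overline{\Omega_1}$; combined with $\omega\equiv 0$ on $X\setminus\Omega_2$, this gives $\omega\equiv 0$ on the open set $X\setminus\overline{\Omega_1}$, in particular on $X\setminus\Omega$.

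The main subtle step, motivating the nested choice $\overline{\Omega_1}\subsetneq\Omega_2$, is to upgrade the Laplacian estimate from $\Omega_2$ to all of $X$ without a singular contribution on $\partial\Omega_1$ or $\partial\Omega_2$. Using $\{\Omega_2,\,X\setminus\overline{\Omega_1}\}$ as an open cover of $X$, one has the local Laplacian $\bd\bar u\restr{\Omega_2}$ on $\Omega_2$ (of bounded density by Lewy--Stampacchia) and the trivial local Laplacian $0$ on $X\setminus\overline{\Omega_1}$ (where $\omega\equiv 0$); both agree on the overlap $\Omega_2\setminus\overline{\Omega_1}$, so a standard Lipschitz partition-of-unity argument patches them into a globally defined $\bd\omega\ll\mm$ with density in $L^\infty(\mm)$. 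Finally, in the $\RCD^*(K,N)$ case, Proposition~\ref{lipschitzianity} applied to $\bar u$ with the Lipschitz obstacles $\varphi,\psi$ directly yields the desired Lipschitz representative.
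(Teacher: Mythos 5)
Your proposal is correct and follows essentially the same route as the paper: rescaled squared-distance obstacles recognized as $c$-concave, the Laplacian comparison of Theorem \ref{laplcomp}, the Lewy--Stampacchia inequality of Theorem \ref{LSmetric}, continuity from Theorem \ref{contrepr}, and Proposition \ref{lipschitzianity} for the $\RCD^*(K,N)$ case. The only real difference is organizational: the paper assumes $\Omega$ bounded without loss of generality and applies Lewy--Stampacchia directly with the globally defined obstacles (whose Laplacians are controlled on all of $X$), so no gluing is required, whereas your nested sets $\Omega_1\subset\Omega_2$ and partition-of-unity patching make explicit the passage from $\bd\bar u\restr{\Omega_2}$ to $\bd\omega$ on all of $X$ --- a point the paper's proof treats rather briskly.
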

\begin{proof} Without loss of generality we shall assume that   $\Omega$ is bounded and $\mm (X\setminus\Omega)>0$. Let $r>0$ be given by $r^2:=\inf_{x\in C}\sfd^2(x,X\setminus\Omega)/2$ and define $\varphi,\psi:X\to\R$ as
\[
\begin{split}
\varphi(x):=1-1\land \inf_{y\in C}\frac{\sfd^2(x,y)}{2r^2},\qquad\qquad\psi(x):=1\land\inf_{y\in X\setminus \Omega}\frac{\sfd^2(x,y)}{2r^2}.
\end{split}
\]
By construction, $\varphi$ and $\psi$ are Lipschitz with $\Lip(\varphi)$, $\Lip(\psi)\leq 1/r$, they belong to  $ W^{1,2}_0(\Omega)$ and satisfy  $\varphi\leq \psi$, $\varphi=\psi=0$ in $X\setminus\Omega$ and $\varphi=\psi=1$ in $C$.  Moreover, 
\[
-r^2\varphi(x)=\inf_{y\in X}\frac{\sfd^2(x, y)}{2}+r^2\nchi_C(y),\qquad
r^2\psi(x)=\inf_{y\in X}\frac{\sfd^2(x, y)}{2}+r^2\nchi_\Omega(y).
\]
so that the functions $-r^2\varphi,r^2\psi$ satisfy the assumption of Theorem \ref{laplcomp}. Therefore the 1-homogeneity of the Laplacian (which is a direct consequence of the definition) gives
\[
\bd\varphi\geq -\frac1{r^2}\,\ell_{K,N}(r)\mm, \qquad
\bd\psi\leq\frac1{r^2}\, \ell_{K,N}(r)\, \mm.
\]
The thesis now follows letting $\omega$ be the minimum of $E$ on $[\varphi,\psi]$: existence,  uniqueness  and continuity are granted by Theorem \ref{contrepr} (pick $f\equiv 0$), while the uniqueness of the Laplacian and Theorem \ref{LSmetric} give $\bd\omega\ll\mm$ with $\Big\|\frac{\d\bd\omega}{\d\mm}\Big\|_\infty\leq \ell_{K, N}(r)/r^2$. The second part of the statement then follows from the first and Theorem \ref{lipschitzianity}.
\end{proof}
We now turn to the regularization of Kantorovich potentials. To keep the discussion simple, we shall assume in the next theorem that the given Kantorovich potential $\varphi$ is Lipschitz with bounded support. Such a $\varphi$ exists if, for instance, the $W_2$-geodesic considered is made of measures with compact supports.  
\begin{theorem}[Regularization of Kantorovich potentials along a geodesic]\label{thm:regkant} Let $(X,\sfd,\mm)$ be an infinitesimally strictly convex $\CD^*(K,N)$ space, $(\mu_t)\subset\probt X$ a $W_2$-geodesic and $\varphi:X\to\R$ a Kantorovich potential inducing it. Assume that $\varphi$ is Lipschitz with compact support.

Then for every $t\in(0,1)$ there exists a function $\eta_t\in C_c(X)$ such that 
\begin{subequations}
\label{eq:etat}
\begin{align}
\label{eq:boundeta}
&- Q_t(-\varphi)\leq \eta_t\leq Q_{1-t}(-\varphi^c),\\
\label{eq:etacc}
&(t\eta_t)^{cc}(x)=t\eta_t(x)\qquad\text{and}\qquad (-(1-t)\eta_t)^{cc}(x)=-(1-t)\eta_t(x),\qquad\forall x\in\supp\mu_t,
\end{align}
\end{subequations}
belonging to $D(\bd, X)$ with $\bd\eta_t\ll\mm$ and 
\begin{equation}
\label{eq:etalap}
\Big\|\frac{\d\bd\eta_t}{\d\mm}\Big\|_{L^\infty}\leq \frac{\ell_{K,N}(2\sqrt{t\|\varphi\|_{L^\infty}})}{t}\lor\frac{\ell_{K,N}(\sqrt{2(1-t)\|\varphi\|_{L^\infty}})}{1-t}
\end{equation}
If $(X,\sfd,\mm)$ is also infinitesimally Hilbertian (i.e.\ a $\RCD^*(K,N)$ space), then $\eta_t$ can be chosen to be Lipschitz.
\end{theorem}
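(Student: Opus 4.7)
Set $\alpha_t:=-Q_t(-\varphi)$ and $\beta_t:=Q_{1-t}(-\varphi^c)$. Since $\varphi$, and hence $\varphi^c$, is Lipschitz with compact support, direct Hopf--Lax estimates show that $\alpha_t,\beta_t$ are Lipschitz and vanish outside a bounded set, so for an open bounded $\Omega$ containing their supports we have $\alpha_t,\beta_t\in W^{1,2}_0(\Omega)\cap C_c(X)$. By Proposition \ref{prop:kantev} and \eqref{eq:intpot}, $\alpha_t\leq \beta_t$ everywhere with equality on $\supp\mu_t$. Take $\eta_t$ to be the minimizer of $E$ over $[\alpha_t,\beta_t]\subset W^{1,2}_0(\Omega)$; Theorem \ref{contrepr} gives existence and a continuous, compactly supported representative, and the bounds \eqref{eq:boundeta} follow from $\alpha_t\leq \eta_t\leq \beta_t$. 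In the $\RCD^*(K,N)$ case, Proposition \ref{lipschitzianity} upgrades this representative to a Lipschitz function.

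For the Laplacian bound \eqref{eq:etalap}, apply Theorem \ref{laplcomp} to the Lipschitz $c$-concave functions $(t\varphi)^c=tQ_t(-\varphi)$ and $((1-t)\varphi^c)^c=(1-t)Q_{1-t}(-\varphi^c)$, whose Lipschitz constants are bounded by $2\sqrt{t\|\varphi\|_{L^\infty}}$ and $\sqrt{2(1-t)\|\varphi\|_{L^\infty}}$ respectively (the latter using $\|\varphi^c\|_{L^\infty}\leq\|\varphi\|_{L^\infty}$). Rewriting the resulting upper bounds on $\bd((t\varphi)^c)$ and $\bd(((1-t)\varphi^c)^c)$ in terms of $\alpha_t,\beta_t$ via the $1$-homogeneity of $\bd$ yields $\bd\alpha_t\geq -\tfrac{1}{t}\ell_{K,N}(2\sqrt{t\|\varphi\|_{L^\infty}})\,\mm$ and $\bd\beta_t\leq \tfrac{1}{1-t}\ell_{K,N}(\sqrt{2(1-t)\|\varphi\|_{L^\infty}})\,\mm$, and the metric Lewy--Stampacchia inequality (Theorem \ref{LSmetric}) then sandwiches $\bd\eta_t$ between $\bd\alpha_t\wedge 0$ and $\bd\beta_t\vee 0$, yielding $\bd\eta_t\ll\mm$ together with \eqref{eq:etalap}.

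The main technical point is the $c$-concavity identity \eqref{eq:etacc}. Since $\eta_t=\alpha_t=\beta_t$ on $\supp\mu_t$ by the equality case of \eqref{eq:intpot}, one has $t\eta_t(x)=-(t\varphi)^c(x)$ and $(1-t)\eta_t(x)=((1-t)\varphi^c)^c(x)$ for $x\in\supp\mu_t$. The strategy is, for each such $x$ lying on an optimal geodesic through $y_0\in\supp\mu_0$ and $y_1\in\supp\mu_1$, to exhibit a $c$-concave $\rho$ with $\rho\geq t\eta_t$ globally (respectively $\rho\geq -(1-t)\eta_t$ globally) and with $\rho(x)$ matching the value of the function at $x$; then \eqref{eq:etacc} follows from the general principle that $(\cdot)^{cc}$ is the smallest $c$-concave function dominating its argument, combined with the always-valid $(\cdot)^{cc}\geq(\cdot)$. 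The natural candidates are appropriately translated squared-distance functions from $y_0$ or $y_1$ scaled by a factor at most $1$ (which is precisely what makes them $c$-concave), combined with the global majorants $t\eta_t\leq tQ_{1-t}(-\varphi^c)$ and $-(1-t)\eta_t\leq(1-t)Q_t(-\varphi)$ and the Fenchel-type identity $\varphi(y_0)+\varphi^c(y_1)=\sfd^2(y_0,y_1)/2$ encoded in Proposition \ref{prop:kantev}. Carrying out the matching cleanly --- with a case split according to whether $t\leq 1/2$ or $t\geq 1/2$, chosen so that the required scaling factor stays in the $c$-concavity regime --- and verifying that the candidates dominate $t\eta_t$ (resp.\ $-(1-t)\eta_t$) globally, not merely along their obstacle upper bound, is what I expect to be the main technical hurdle.
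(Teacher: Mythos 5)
Everything up to \eqref{eq:etacc} coincides with the paper's own argument: the choice of the obstacles $\alpha_t=-Q_t(-\varphi)$, $\beta_t=Q_{1-t}(-\varphi^c)$, the use of Theorem \ref{contrepr} on a bounded neighbourhood of a common compact set containing the supports, the identification $(t\varphi)^c=tQ_t(-\varphi)$ and $((1-t)\varphi^c)^c=(1-t)Q_{1-t}(-\varphi^c)$ to invoke Theorem \ref{laplcomp}, the $1$-homogeneity of the Laplacian plus Theorem \ref{LSmetric} to get \eqref{eq:etalap}, and Proposition \ref{lipschitzianity} for the Lipschitz upgrade. The genuine gap is \eqref{eq:etacc}, which you explicitly leave as an unexecuted ``technical hurdle''. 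Your plan --- picking, for each $x\in\supp\mu_t$, a geodesic through $y_0\in\supp\mu_0$ and $y_1\in\supp\mu_1$, building $c$-concave majorants out of translated and rescaled squared-distance functions, and splitting into the cases $t\leq 1/2$ and $t\geq 1/2$ to keep the scaling factor in the $c$-concavity regime --- is never carried out, and it is also unnecessary. The required $c$-concave majorants are already supplied by Proposition \ref{prop:kantev}: $tQ_t(-\varphi)$ and $(1-t)Q_{1-t}(-\varphi^c)$ are Kantorovich potentials (from $\mu_t$ to $\mu_0$, resp.\ to $\mu_1$), hence $c$-concave; by \eqref{eq:boundeta} they dominate $-t\eta_t$ and $(1-t)\eta_t$ everywhere, and by the equality case in \eqref{eq:intpot} together with \eqref{eq:boundeta} one has $-Q_t(-\varphi)=\eta_t=Q_{1-t}(-\varphi^c)$ on $\supp\mu_t$, so the majorants touch there. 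Since $\psi^{cc}$ is the least $c$-concave function above $\psi$, the conclusion is immediate --- no geodesic selection, no bespoke candidates, no case split.

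The reason you were driven to the elaborate construction is the pairing of scaling factors in \eqref{eq:etacc} as literally written: majorizing $t\eta_t$ and $-(1-t)\eta_t$ via \eqref{eq:boundeta} produces $tQ_{1-t}(-\varphi^c)$ and $(1-t)Q_t(-\varphi)$, which Proposition \ref{prop:kantev} does not directly certify as $c$-concave; making them so requires the extra fact that $\lambda g$ is $c$-concave for $c$-concave $g$ and $\lambda\in[0,1]$, which only covers one of your two ranges of $t$ at a time --- hence your case split, and hence the part you could not close. The paper's two-line argument instead yields $(-t\eta_t)^{cc}=-t\eta_t$ and $((1-t)\eta_t)^{cc}=(1-t)\eta_t$ on $\supp\mu_t$, i.e.\ it pairs the factor $t$ with $Q_t(-\varphi)$ and the factor $1-t$ with $Q_{1-t}(-\varphi^c)$, exactly as dictated by Proposition \ref{prop:kantev}. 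Align the factors that way and the proof of \eqref{eq:etacc} collapses; as it stands, your treatment of this step is a strategy sketch, not a proof.
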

\begin{proof} Directly from the definition we see that if $\sfd^2(x,\supp\varphi)\geq 2\|\varphi\|_{L^\infty}$, then $Q_t(-\varphi)(x)=0$ for every $t\in(0,1)$. It follows that $\supp(Q_t(-\varphi))$ and $\supp(Q_t(-\varphi^c))$ are uniformly bounded for $t\in(0,1]$. Recalling that $\CD^*(K,N)$ spaces are proper (because they are doubling), we conclude that the sets $\supp(Q_t(-\varphi))$ and $\supp(Q_t(-\varphi^c))$ are all contained in some fixed compact set $C$ for all $t\in(0,1)$. By definition, it is also easy to check that the minimum in the definition of $Q_t(-\varphi)(x)$ is reached at a point $x_t$ such that $\sfd^2(x,x_t)\leq 4t\|\varphi\|_{L^\infty}$. It follows that $\Lip(Q_t(-\varphi))\leq 2\sqrt{\|\varphi\|_{L^\infty}/t}$ and similarly $\Lip(Q_{1-t}(-\varphi^c))\leq2 \sqrt{\|\varphi\|_{L^\infty}/(1-t)}$.
Clearly, $-Q_t(-\varphi), Q_{1-t}(-\varphi^c)\in W^{1,2}_0(X)$ and, by the first in \eqref{eq:intpot} we know that $- Q_t(-\varphi)\leq Q_{1-t}(-\varphi^c)$. 

We apply Theorem \ref{contrepr} on a bounded neighborhood $\Omega$ of $C$ with  $f\equiv0$: we deduce the existence of a minimum  $\eta_t\in W^{1,2}_0(\Omega)$ of $E$  on $[- Q_t(-\varphi), Q_{1-t}(-\varphi^c)]\subset W^{1,2}_0(\Omega)$ which has a continuous representative, still denoted by $\eta_t$. Moreover $\supp\eta_t\subseteq C$, so that $\eta_t\in C_c(X)$. To check  \eqref{eq:etacc}, notice that directly from the definition one has that for arbitrary $\psi:X\to\R$, the function $\psi^{cc}$ is the least $c$-concave function greater or equal than $\psi$ everywhere on $X$, so that the claim follows from the $c$-concavity of $t Q_t(-\varphi)$ and $(1-t)Q_{1-t}(-\varphi^c)$ and the second in \eqref{eq:intpot}, which together with \eqref{eq:boundeta} gives
\[
-Q_t(-\varphi)= \eta_t= Q_{1-t}(-\varphi^c)\qquad\text{on }\supp\mu_t.
\]
For \eqref{eq:etalap} notice that  $t Q_t(-\varphi)$ is $c$-concave with $\Lip(t Q_t(-\varphi))\leq2 \sqrt{t\|\varphi\|_{L^\infty}}$, so that by Theorem \ref{laplcomp} we deduce $\bd(t Q_t(-\varphi))\leq \ell_{K,N}( 2\sqrt{t\|\varphi\|_{L^\infty}})$. Similarly $\bd Q_{1-t}(-\varphi^c)\leq \ell_{K,N}(2 \sqrt{(1-t)\|\varphi\|_{L^\infty}})$, so that the 1-homogeneity of the Laplacian and Theorem \ref{LSmetric} yield \eqref{eq:etalap}.

The last statement concerning Lipschitz regularity is then a consequence of the construction and Theorem \ref{lipschitzianity}.
\end{proof}
We remark that in general the function $t\eta_t$ (resp. $-(1-t)\eta_t$) produced by the previous theorem is not $c$-concave, yet \eqref{eq:etacc} grants that, in a sense, it is `$c$-concave in the region of interest', i.e.:
\[
\begin{split}
t\eta_t(x)+(t\eta_t)^c(y)&\leq\frac{\sfd^2(x,y)}{2},\qquad\forall (x,y)\in X^2,\\
t\eta_t(x)+(t\eta_t)^c(y)&=\frac{\sfd^2(x,y)}{2},\qquad\forall (x,y)\in\supp\ggamma,
\end{split}
\]
for every optimal plan $\ggamma$ from $\mu_t$ to $\mu_0$ (resp.\ from $\mu_t$ to $\mu_1$).

\def\cprime{$'$} \def\cprime{$'$}

\end{document}